\newcommand{\pd}{\partial}
\newcommand{\bC}{{\mathbb C}}
\newcommand{\bP}{{\mathbb P}}
\newcommand{\bR}{{\mathbb R}}
\newcommand{\bZ}{{\mathbb Z}}
\newcommand{\cO}{{\mathcal O}}
\newcommand{\half}{\frac{1}{2}}
\newtheorem{theorem}{Theorem}[section]
\newtheorem{theorem/definition}{Theorem/Definition}[section]
\newtheorem{prop}{Proposition}[section]
\theoremstyle{remark}
\theoremstyle{definition}
\newcommand{\be}{\begin{equation}}
\newcommand{\ee}{\end{equation}}
\newcommand{\bea}{\begin{eqnarray}}
\newcommand{\ben}{\begin{eqnarray*}}
\newcommand{\een}{\end{eqnarray*}}
\newcommand{\eea}{\end{eqnarray}}
\newcommand{\bet}{\begin{equation}
\begin{split}}
\newcommand{\eet}{\end{split}
\end{equation}}
\definecolor{yellow}{rgb}{1,1,0}
\definecolor{orange}{rgb}{1,.7,0}
\definecolor{red}{rgb}{1,0,0} \definecolor{green}{rgb}{0,1,1}
\definecolor{white}{rgb}{1,1,1}
\definecolor{A}{rgb}{.75,1,.75}
\newcommand\Red[1]{\color{red}{#1}\color{black}\,}
\begin{document}

\title
{Hessian Geometry and Phase Change of Gibbons-Hawking Metrics}

\author{Jian Zhou}
\address{Department of Mathematical Sciences\\
Tsinghua University\\Beijing, 100084, China}
\email{jzhou@math.tsinghua.edu.cn}

\begin{abstract}
We study the Hessian geometry of toric Gibbons-Hawking metrics and their phase change phenomena
via the images of their moment maps.
\end{abstract}
\maketitle


\section{Introduction}

This is a sequel to \cite{Zhou-Kepler} in which techniques developed in string theory
were applied to study the Kepler problem in classical gravity.
We will apply the same techniques in this paper to study
gravitational instantons in Euclidean gravity in dimension four of type $A_{n-1}$,
i.e., the Gibbons-Hawking metrics.

Since the advent of AdS/CFT correspondence \cite{Mal, Wit} in string theory,
which are based supergravity solutions,
there have appeared new constructions of Sasaki-Einstein metrics \cite{GMSW}
which were soon realized to be toric \cite{Mar-Spa}.
Symplectic techniques developed for toric K\"ahler metrics on compact
manifolds was generalized to the metric cones of the toric Sasaki-Einstein spaces,
which are K\"ahler Ricci-flat.
This leads to the applications of
Hessian geometry \cite{Shima} on convex cones to AdS/CFT \cite{Mar-Spa-Yau}.
These techniques were generalized and applied to the Kepler problem in \cite{Zhou-Kepler}.
First, in \S 6 of that work,
we use the explicit construction of K\"ahler metrics
with $U(n)$-symmetry \cite{LeBrun, Duan-Zhou1, Duan-Zhou2} to
obtain applications of symplectic coordinates
and Hessian geometry.
Next,  these are applied to the $A_1$ case of Gibbons-Hawking metrics \cite{Gib-Haw}
(i.e., the Eguchi-Hanson metrics \cite{Egu-Han}), as the Kepler metric on the Kepler manifold $K_2$,
in \cite[\S 7.2]{Zhou-Kepler}.
An alternative treatment, based on Calabi ansatz \cite{Calabi},
is presented in \cite[\S 8]{Zhou-Kepler}.
Generalizations to the Kepler metric and the K\"ahler Ricci-flat metrics on the
resolved conifold are made in \S 9 and \S 10 of \cite{Zhou-Kepler},
based on Calabi ansatz on $\cO_{\bP^1}(-1) \oplus \cO_{\bP^1}(-1)$
and $\cO_{\bP^1\times \bP^1}(-1,-1)$ respectively.
The basic observations in \cite{Zhou-Kepler} is that all these metrics obtained
by such explicit constructions are toric
and the images of their moment maps are polyhedral convex bodies,
and one can recover the complex structures and K\"ahler structures
by Hessian structures \cite{Shima} on the convex bodies.
In this paper we will show the same holds for toric Gibbons-Hawking metrics.

As a direct consequence of the applications of moment maps and Hessian geometry,
we will study some phase change phenomena of the Gibbons-Hawking metrics.
We will describe a procedure in which ``black hole" appears naturally such that
outside the ``blackhole" we still have Riemannian metric
but inside it we get {\em imaginary} Riemannian metric.
From a string theoretical point of view this is very natural.
See the discussions in \S \ref{sec:Conclusions}.

The notion of a phase change for K\"ahler metrics were first introduced
in \cite{Duan-Zhou1, Duan-Zhou2}.
In \cite[\S 10.6]{Zhou-Kepler},
the author presented a new method to describe the flop of K\"ahler Ricci-flat
metrics on the resolved conifold \cite{Can-Oss}
by embedding them in a two-parameter family of K\"ahler Ricci-flat metrics
on the canonical line bundle of $\bP^1 \times \bP^1$.
It then becomes natural to reexamine the phase change phenomena introduced
in \cite{Duan-Zhou1, Duan-Zhou2} from the point of view of \cite{Zhou-Kepler}.
This has been carried out recently for local $\bP^1$, local $\bP^2$ and local $\bP^1 \times \bP^1$
cases in \cite{Wang-Zhou}.
The underlying spaces for these case are toric Calabi-Yau 3-folds $\cO_{\bP^1}(-1) \oplus \cO_{\bP^1}(-1)$,
$\cO_{\bP^2}(-3)$ and $\cO_{\bP^1 \times \bP^1}(-2, -2)$ respectively.
In this paper,
we will treat the case of toric Calabi-Yau 2-folds,
i.e.,
the crepant resolutions of $\bC^2/\bZ_n$.
They are also called the Gibbons-Hawking spaces,
or the ALE spaces of type A.
We conjecture that the phase change phenomena described in this paper can also
occur on ALE spaces of type D and type E.

The rest of the paper is arranged as follows.
In \S \ref{sec:GH},
after recalling the Gibbons-Hawking construction we present some
explicit choices of connection $1$-forms involved in this construction.
The applications of moment maps and Hessian geometry to toric
Gibbons-Hawking metrics are presented in \S \ref{sec:Hessian}.
Hessian geometry is used in \S \ref{sec:Complex} to explicitly
construct local complex coordinates on Gibbons-Hawking spaces
and to identify them with the crepant resolutions of $\bC^2/\bZ_n$.
We describe the phase changes for Gibbons-Hawking metrics in \S \ref{sec:Phase}.
We end the paper by some concluding remarks in \S \ref{sec:Conclusions}.

\section{Gibbons-Hawking Construction}
\label{sec:GH}

In this Section we first recall the Gibbons-Hawking construction \cite{Gib-Haw},
then present some explicit choices involved in the construction,
which will be crucial for computations in later Sections.

\subsection{Gibbons-Hawking construction}

Given $n$ distinct points $\vec{p}_1, \dots, \vec{p}_n$ in $\bR^3$,
consider a function $V$ defined by
\be
V(\vec{r}) = \half \sum_{j=1}^n \frac{1}{|\vec{r}-\vec{p}_j|}.
\ee
Clearly, $V$ is a solution of the Laplace equation
\be
\Delta V = 0
\ee
on $\bR^3-\{p_1, \dots, p_n\}$.
Denote by $\ast$ the Hodge star-operator on $\bR^3$.
Since $\Delta V = - * d*d V $,
we have
\be
d * dV = 0.
\ee
Let $U\subset \bR^3-\{p_1, \dots, p_n\}$ be a domain on which
we have
\be
*dV = - d\alpha.
\ee
On the principal bundle $U \times S^1 \to U$ with connection $1$-form $d\varphi+\alpha$,
where $\varphi$ is the natural coordinate on $S^1$, i.e., $e^{i\varphi} \in S^1$,
the horizontal lifts of
vector fields $\frac{\pd}{\pd x}$, $\frac{\pd}{\pd y}$ and $\frac{\pd}{\pd z}$
are
\begin{align}
\widehat{\frac{\pd}{\pd x}} & = \frac{\pd}{\pd x}
- \alpha(\frac{\pd}{\pd x}) \cdot \frac{\pd}{\pd \varphi}, \\
\widehat{\frac{\pd}{\pd x}} & = \frac{\pd}{\pd y}
- \alpha(\frac{\pd}{\pd y}) \cdot  \frac{\pd}{\pd \varphi}, \\
\widehat{\frac{\pd}{\pd z}} & = \frac{\pd}{\pd z}
- \alpha(\frac{\pd}{\pd x}) \cdot \frac{\pd}{\pd \varphi}.
\end{align}
Then
$$\biggl\{ V^{1/2} \frac{\pd}{\pd \varphi},V^{-1/2} \widehat{\frac{\pd}{\pd x}},
V^{-1/2}\widehat{\frac{\pd}{\pd y}}, V^{-1/2}\widehat{\frac{\pd}{\pd z}} \biggr\}$$
with the dual basis
\be
\{V^{-1/2} (d\varphi+\alpha), V^{1/2}dx, V^{1/2}dy, V^{1/2}dz\},
\ee
is a local orthonormal frame for the metric
\be
g = \frac{1}{V}(d\varphi + \alpha)^2 + V \cdot (dx^2+dy^2+dz^2).
\ee
This metric is the {\em Gibbons-Hawking metric} in local coordinates $\{\varphi, x, y, z\}$.

\subsection{Complex structures on Gibbons-Hawking spaces}

Consider the almost complex structure given by:
\begin{align*}
J\biggl(V^{1/2} \frac{\pd}{\pd \varphi}\biggr)
&= V^{-1/2} \widehat{\frac{\pd}{\pd z}}, &
J\biggl(V^{-1/2} \widehat{\frac{\pd}{\pd z}}\biggr)
& = - V^{1/2} \frac{\pd}{\pd \varphi}, \\
J\biggl(V^{-1/2} \widehat{\frac{\pd}{\pd x}}\biggr)
& = V^{-1/2} \widehat{\frac{\pd}{\pd y}}, &
J \biggl(V^{-1/2} \widehat{\frac{\pd}{\pd y}}\biggr) &
= - V^{-1/2} \widehat{\frac{\pd}{\pd x}}.
\end{align*}

The induced almost complex structure on the cotangent bundle in terms of
the orthonormal frame is given by:
\begin{align}
J^*(V^{-1/2}(d\varphi + \alpha)) & = - V^{1/2}dz, &
J^*(V^{1/2} d z) & = V^{-1/2}(d\varphi + \alpha), \\
J^*(V^{1/2}dx) & = - V^{1/2} dy, & J^*(V^{1/2}dy) & = V^{1/2} dx.
\end{align}
Therefore,
the space of type $(1, 0)$-forms are generated by:
\begin{align}
dx + \sqrt{-1} dy, && (d\varphi + \alpha) + \sqrt{-1}V dz.
\end{align}
One can compute their exterior differentials and
see that they do not contain $(0,2)$-components as follows:
\ben
&& d (dx + \sqrt{-1} dy) = 0, \\
&& d ((d\varphi + \alpha) + \sqrt{-1}{V} dz)
= d \alpha + \sqrt{-1} dV \wedge dz \\
& = & - *dV + \sqrt{-1} dV \wedge dz\\
& = & - \frac{\pd V}{\pd x} dy \wedge dz - \frac{\pd V}{\pd y} dz \wedge dx
- \frac{\pd V}{\pd z} dx \wedge dy \\
& + & \sqrt{-1} \frac{\pd V}{\pd x} dx \wedge dz
+ \sqrt{-1} \frac{\pd V}{\pd y} dy \wedge dz \\
& = & \sqrt{-1} \frac{\pd V}{\pd x} (dx + \sqrt{-1} dy) \wedge dz
+ \frac{\pd V}{\pd y}(dx + \sqrt{-1}dy ) \wedge dz \\
& - & \frac{\sqrt{-1}}{2} \frac{\pd V}{\pd z}(dx + \sqrt{-1}dy )
\wedge (dx - \sqrt{-1}dy ) \in \Omega^{2,0} \oplus \Omega^{1,1},
\een
therefore,
by Newlander-Nirenberg theorem,
the almost complex structure is integrable.
We will address the problem
of finding explicit local complex coordinates in \S \ref{sec:Complex}.

This complex structure is compatible with the Riemannian metric $g$, with
the symplectic form given by:
\be \label{eqn:Symplectic}
\omega = (d\varphi+\alpha) \wedge d z + V d x \wedge dy.
\ee
Since one has
\ben
d\omega & = & d\alpha \wedge dz + dV \wedge dx \wedge dy \\
& = & -*dV \wedge dz + \frac{\pd V}{\pd z} dz \wedge dx \wedge dy \\
& = & - \frac{\pd V}{\pd z} dx \wedge dy \wedge dz
+ \frac{\pd V}{\pd z} dz \wedge dx \wedge dy = 0,
\een
therefore,
$g, J$ gives a K\"ahler structure on the Gibbons-Hawking space.
Indeed, by changing $(x, y, z)$ to $(y,z,x)$ and $(z, x, y)$,
one gets two more K\"ahler structures, making the Gibbons-Hawking
metrics hyperk\"ahler.

\subsection{Explicit expressions}

Next we consider the explicit expressions.
Let $\vec{r} = (x, y, z)$ and $\vec{p}_j = (a_j, b_j, c_j)$,
then
\ben
&& dV = - \half \sum_{j=1}^n \frac{(x-a_j) dx + (y-b_j) dy + (z-c_j)dz}{|\vec{r}-\vec{p}_j|^3}, \\
&& * dV = - \half \sum_{j=1}^n \frac{(x-a_j) dy \wedge dz + (y-b_j) dz \wedge dx
+ (z-c_j)dx \wedge dy}{|\vec{r}-\vec{p}_j|^3}.
\een
Let us begin with the case of $n=1$ and let $c_1 = 0$,
i.e., $\vec{p}_1 = (0,0,0)$.
Then
\be
*dV =  - \half \frac{x dy \wedge dz + y dz \wedge dx + z dx \wedge dy}{\sqrt{(x^2+y^2+z^2)^3}}.
\ee
This is not an exact form on $\bR^3-\{(0,0,0)\}$,
but on $U:=\bR^3-\{(0,0,z)\;|\;z \geq 0\}$ we will use the stereographic
projection to get the following change of coordinates:
\begin{align}
x & = \frac{2ru}{u^2+v^2+1}, &
y & = \frac{2rv}{u^2+v^2+1}, &
z & = \frac{r(u^2+v^2-1)}{u^2+v^2+1}, \\
u & = \frac{x}{r-z}, &
v & = \frac{y}{r-z}, &
r & = \sqrt{x^2+y^2+z^2}.
\end{align}
Then we have
\be
*dV = \frac{2du \wedge dv}{(1+u^2+v^2)^2} = - d\alpha,
\ee
where
\be
\alpha = -\frac{u dv - v du}{1+u^2+v^2}
= - \half \frac{xdy-ydx}{r(r-z)}.
\ee
On $\tilde{U} = \bR^3-\{(0,0,z)\;|\;z \leq 0\}$ we will use spherical coordinates
\begin{align}
x & = \frac{2r\tilde{u}}{\tilde{u}^2+\tilde{v}^2+1}, &
y & = \frac{2r\tilde{v}}{\tilde{u}^2+\tilde{v}^2+1}, &
z & = \frac{r(1-\tilde{u}^2-\tilde{v}^2)}{\tilde{u}^2+\tilde{v}^2+1}, \\
\tilde{u} & = \frac{x}{r+z}, &
\tilde{v} & = \frac{y}{r+z}.
\end{align}
Then we have
\be
*dV = - \frac{2d\tilde{u} \wedge d\tilde{v}}{(1+\tilde{u}^2+\tilde{v}^2)^2} = -d\tilde{\alpha},
\ee
where
\be
\tilde{\alpha} = \frac{\tilde{u} d\tilde{v} - \tilde{v} d\tilde{u}}{1+\tilde{u}^2+\tilde{v}^2}
= \half \frac{xdy-ydx}{r(r+z)}..
\ee
On $U \cap \tilde{U}$ we have the following coordinate change formula:
\begin{align}
\tilde{u} & = \frac{u}{u^2+v^2}, & \tilde{v} & = \frac{v}{u^2+v^2}, \\
u & = \frac{\tilde{u}}{\tilde{u}^2+\tilde{v}^2}, &
v & = \frac{\tilde{v}}{\tilde{u}^2+\tilde{v}^2}.
\end{align}
One can then check that
\be
\tilde{\alpha} = \frac{\tilde{u} d\tilde{v} - \tilde{v} d\tilde{u}}{1+\tilde{u}^2+\tilde{v}^2}
= - \frac{vdu - udv}{(u^2+v^2) (1+u^2+v^2)}.
\ee
It is easy to check that
\be \label{eqn:tildeAlpha-alpha}
\tilde{\alpha} - \alpha = - \frac{vdu - u d v}{u^2+v^2} = \frac{xdy - y dx}{x^2+y^2}
= e^{-i \theta} \cdot d e^{i \theta},
\ee
where $\theta$ is the argument in the $(x,y)$-plane:
\be
\theta = \arctan \frac{y}{x}.
\ee

To get toric Gibbons-Hawking metrics,
we will let the points $p_1, \dots, p_n$ lie in a line, say,
$p_j = (0, 0, c_j)$, $j=1, \dots, n$,
$c_1 < c_2 < \cdots < c_n$,
we take $U:=\bR^3-\{(0,0,z)\;|\;z \geq c_1\}$ we take:
\begin{align}
x & = \frac{2r_ju_j}{u_j^2+v_j^2+1}, &
y & = \frac{2r_jv_j}{u_j^2+v_j^2+1}, &
z & = \frac{r_j(u_j^2+v_j^2-1)}{u_j^2+v_j^2+1}+c_j, \\
u_j & = \frac{x}{r_j-z+c_j}, &
v_j & = \frac{y}{r_j-z+c_j}, &
r_j & = \sqrt{x^2+y^2+(z-c_j)^2}.
\end{align}
Then we have
\be
*dV = \sum_{j=1}^n \frac{2du_j \wedge dv_j}{(1+u_j^2+v_j^2)^2} = - \sum_{j=1}^n d\alpha_j,
\ee
where
\be
\alpha_j = -\frac{u_j dv_j - v_j du_j}{1+u_j^2+v_j^2}
= - \half \frac{xdy-ydx}{r_j(r_j-z+c_j)}.
\ee
In particular,
\be \label{eqn:Symplectic2}
\omega = \biggl(d\varphi- \half \frac{xdy-ydx}{r_j(r_j-z+c_j)}\biggr) \wedge d z
+ \half \sum_{j=1}^n \frac{1}{r_j} d x \wedge dy.
\ee

On $\tilde{U} = \bR^3-\{(0,0,z)\;|\;z \leq c_n\}$ we will use spherical coordinates
\begin{align}
x & = \frac{2r_j\tilde{u}_j}{\tilde{u}_j^2+\tilde{v}_j^2+1}, &
y & = \frac{2r_j\tilde{v}_j}{\tilde{u}_j^2+\tilde{v}_j^2+1}, &
z & = \frac{r_j(1-\tilde{u}_j^2-\tilde{v}_j^2)}{\tilde{u}_j^2+\tilde{v}_j^2+1}, \\
\tilde{u}_j & = \frac{x}{r_j+z-c_j}, &
\tilde{v}_j & = \frac{y}{r_j+z-c_j}.
\end{align}
Then we have
\be
*dV = - \sum_{j=1}^n \frac{2d\tilde{u}_j \wedge d\tilde{v}_j}{(1+\tilde{u}_j^2+\tilde{v}_j^2)^2}
= - \sum_{j=1}^n d\tilde{\alpha}_j,
\ee
where
\be
\tilde{\alpha}_j = \frac{\tilde{u}_j d\tilde{v}_j
- \tilde{v}_j d\tilde{u}_j}{1+\tilde{u}_j^2+\tilde{v}_j^2}
= \half \frac{xdy-ydx}{r_j(r_j+z-c_j)}.
\ee
On $U \cap \tilde{U}$ we have the following coordinate change formula:
\begin{align}
\tilde{u} & = \frac{u}{u^2+v^2}, & \tilde{v} & = \frac{v}{u^2+v^2}, \\
u & = \frac{\tilde{u}}{\tilde{u}^2+\tilde{v}^2}, &
v & = \frac{\tilde{v}}{\tilde{u}^2+\tilde{v}^2}.
\end{align}
One can then check that
\be \label{eqn:tildeAlpha-Alpha-n}
\tilde{\alpha} - \alpha = - n \frac{vdu - u d v}{u^2+v^2}
= n d\theta = e^{-i n \theta} \cdot d e^{i n\theta} .
\ee

\section{Hessian Geometry of Toric Gibbons-Hawking Spaces}

\label{sec:Hessian}

In this Section we study the toric Gibbons-Hawking metrics
from the point of view of moment maps and Hessian geometry.
For this purpose,
we need to introduce a $2$-torus action and study its moment map.
This leads to Hessian structures  \cite{Shima}
on the convex bodies which arise as the image of the moment map.

\subsection{Torus action and moment map on a toric Gibbons-Hawking space}

We now look at the $2$-torus  action given in local coordinates by:
\be \label{eqn:action}
(e^{i\theta_1}, e^{i\theta_2})  \cdot (\varphi, x, y, z)
= (\varphi+ \theta_1, x\cos\theta_2 - y \sin \theta_2, x\sin \theta_2+y\cos \theta_2, z),
\ee
the associated vector field is
$X_1 = \frac{\pd}{\pd \varphi}$ and $X_2 = -y \frac{\pd}{\pd x} + x \frac{\pd}{\pd y}$.
From the formula \eqref{eqn:Symplectic} for the symplectic form,
it is easy to see that the moment map with respect to the torus action above is given by:
\be
\mu = (\mu_1, \mu_2) = (-z, \half\sum_{j=1}^n (r_j+z-c_j)),
\ee
where  $r_j$ is defined by
\be
r_j = \sqrt{x^2+y^2+(z-c_j)^2}.
\ee
Indeed, we have
\bea
d \mu_1 & = &- d z = -i_{\frac{\pd}{\pd \varphi}}\omega =- i_{X_1} \omega,
\label{eqn:dmu1} \\
d \mu_2 
& = & \half \sum_{i=1}^n (\frac{xdx+ydy+(z-c_j)dz}{r_j} + dz)
= -i_{X_2} \omega. \label{eqn:dmu2}
\eea

\subsection{Image of the moment map}
Note:
\ben
\mu_2& = & \half \sum_{j=1}^n (\sqrt{x^2+y^2+(z-c_j)^2} + z-c_j) \\
& \geq & \half \sum_{j=1}^n (|z-c_j|+z-c_j),
\een
and since
\be
|z-c_j|+z-c_j = \begin{cases}
0, & \text{if $z\leq c_j$}, \\
2(z-c_j), & \text{if $z \geq c_j$},
\end{cases}
\ee
it is easy to see that the image of the moment map is the convex region given by
the following inequalities:
\ben
&& l_0: = \mu_2 \geq 0, \\
&& l_1: = \mu_2 + (\mu_1 + c_1) \geq 0, \\
&& l_2: = \mu_2 + (\mu_1 + c_1) + (\mu_1 + c_2) \geq 0, \\
&&  \cdots\cdots \cdots \cdots \cdots \cdots \\
&& l_n: = \mu_2 + \sum_{j=1}^n (\mu_1 + c_j) \geq 0.
\een

\subsection{Symplectic coordinates and Hessian geometry for toric
Gibbons-Hawking spaces}

The following is our first main result:

\begin{prop}
If one takes the following local coordinates:
\begin{align}
\theta_1 & = \varphi, & \mu_1 & = -z, \\
\theta_2 & = \arctan \frac{y}{x}, &
\mu_2 & = \half\sum_{j=1}^n (r_j+z-c_j),
\end{align}
then the symplectic form takes the following form:
\be \label{eqn:Symplectic3}
\omega 
= d\mu_1 \wedge d\theta_1  + d\mu_2 \wedge d\theta_2.
\ee
And in these symplectic coordinates,
the Gibbons-Hawking metric takes the following form:
\be \label{eqn:g-Hessian}
\begin{split}
g &  = \frac{1}{V} d\theta_1^2 + \frac{1}{V}
\sum_{j=1}^n\frac{\rho^2}{r_j(r_j-(z-c_j))} d\theta_1 d\theta_2 \\
& + \biggl[V \rho^2 +
\frac{1}{4V} \biggl(\sum_{j=1}^n\frac{\rho^2}{r_j(r_j-(z-c_j))}\biggr)^2 \biggr] d\theta_2^2 \\
& +  \biggl[V  +
\frac{\rho^2}{4V} \biggl(\sum_{j=1}^n\frac{1}{r_j(r_j-(z-c_j))}\biggr)^2 \biggr] d\mu_1^2 \\
& -   \frac{1}{V}
\sum_{j=1}^n\frac{1}{r_j(r_j-(z-c_j))} d\mu_1 d\mu_2 + \frac{1}{V\rho^2} d\mu_2^2,
\end{split}
\ee
where $\rho^2 = x^2 + y^2$.
The complex potential and the K\"ahler potential are given by the following formulas respectively:
\be \label{eqn:Complex}
\begin{split}
\psi & = \half \sum_{j=1}^n \biggl( (r_j + (z-c_j)) \log (r_j + (z-c_j)) \\
& + (r_j-(z-c_j)) \log(r_j-(z-c_j)) \biggr) +C_1\mu_1 + C_2\mu_2.
\end{split}
\ee
And the K\"ahler potential is given by:
\be \label{eqn:Kahler}
\psi^\vee= -\sum_{j=1}^n c_j \log(r_j-(z-c_j))+C_1\mu_1 + C_2\mu_2.
\ee
for some constants $C_1$ and $C_2$.
\end{prop}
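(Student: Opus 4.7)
The proof is a three-stage direct computation in the action--angle coordinates $(\mu_1,\mu_2,\theta_1,\theta_2)$.

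First, for the symplectic form, I introduce polar coordinates $(\rho,\theta_2)$ in the $(x,y)$-plane, so that $x\,dy-y\,dx=\rho^2\,d\theta_2$ and $dx\wedge dy=\rho\,d\rho\wedge d\theta_2$, and set $\eta_j:=r_j+(z-c_j)$, $\zeta_j:=r_j-(z-c_j)$, so that $\eta_j\zeta_j=\rho^2$. The connection form rewrites as $\alpha=-F\,d\theta_2$ with $F:=\frac{1}{2}\sum_j\rho^2/(r_j(r_j-(z-c_j)))=\frac{1}{2}\sum_j\eta_j/r_j$. Differentiating the defining formulas gives $d\mu_1=-dz$ and $d\mu_2=\rho V\,d\rho+F\,dz$, which I invert to express $(d\rho,dz)$ in the new basis. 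Substituted into \eqref{eqn:Symplectic2}, the cross terms $F\,d\theta_2\wedge dz$ and $F\,d\mu_1\wedge d\theta_2$ cancel, leaving \eqref{eqn:Symplectic3}.

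Next, for the metric, I apply the same change of variables to $g=V^{-1}(d\varphi+\alpha)^2+V(d\rho^2+\rho^2 d\theta_2^2+dz^2)$. With $d\varphi+\alpha=d\theta_1-F\,d\theta_2$, I expand each square into symmetric products of the four basis 1-forms; the identifications $2F=\sum_j\rho^2/(r_j(r_j-(z-c_j)))$ and $2F/\rho^2=\sum_j 1/(r_j(r_j-(z-c_j)))$ then put the coefficients in the form displayed in \eqref{eqn:g-Hessian}.

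Finally, for the two potentials I use the chain rule with the Jacobian from the first step. Because $\log(\eta_j\zeta_j)=\log\rho^2$ is $j$-independent, $\partial\psi/\partial\mu_2$ telescopes via $\sum 1/r_j=2V$ to $2\log\rho+\text{const}$; a similar but longer calculation, using also $\eta_j+\zeta_j=2r_j$, gives the clean form $\partial\psi/\partial\mu_1=\sum_j\log\zeta_j+\text{const}$. A second differentiation then reproduces the $\mu\mu$-block of \eqref{eqn:g-Hessian} (up to the conventional normalization of the Hessian potential). For $\psi^\vee$ I form the Legendre transform $\psi^\vee=\mu_1\,\partial\psi/\partial\mu_1+\mu_2\,\partial\psi/\partial\mu_2-\psi$: after substitution the term $\mu_2\log\rho^2=\frac{1}{2}\sum_j\eta_j\log(\eta_j\zeta_j)$ combines with $-\frac{1}{2}\sum_j(\eta_j\log\eta_j+\zeta_j\log\zeta_j)$, and the remaining piece $\mu_1\sum_j\log\zeta_j$ uses $\mu_1=-z$ together with $\eta_j-\zeta_j=2(z-c_j)$ to collapse everything into $-\sum_j c_j\log\zeta_j$ modulo the linear term $C_1\mu_1+C_2\mu_2$. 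The main obstacle is bookkeeping in this last step; the cleanest organization is to reduce every expression to combinations of $1/r_j$, $\eta_j/r_j$, and $\log\zeta_j$, whereupon the identities $\eta_j\zeta_j=\rho^2$ and $\eta_j-\zeta_j=2(z-c_j)$ do all the work.
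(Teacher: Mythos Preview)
Your proposal is correct and follows essentially the same computational route as the paper: polar coordinates in the $(x,y)$-plane, the key relation $d\mu_2=V\rho\,d\rho+F\,dz$ (your $F$ is the paper's $\tfrac12\sum_j(r_j+z-c_j)/r_j$), substitution into $\omega$ and $g$, and the Legendre transform for $\psi^\vee$. The only organizational difference is in the treatment of the complex potential: you \emph{verify} the stated $\psi$ by differentiating it via the chain rule and checking that the Hessian matches the $\mu\mu$-block of the metric, whereas the paper works in the opposite direction, showing that the one-forms $\sum_j G_{ij}\,d\mu_j$ are exact (equal to $\tfrac12\,d\sum_j\log(r_j-(z-c_j))$ and $d\log\rho$ respectively) and then \emph{integrating} to find $\psi$. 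Both routes pass through the same intermediate identities $\partial\psi/\partial\mu_1=\sum_j\log\zeta_j+C_1$ and $\partial\psi/\partial\mu_2=2\log\rho+C_2$, so the distinction is purely one of direction. Your abbreviations $\eta_j,\zeta_j$ and the systematic reduction to $\eta_j\zeta_j=\rho^2$, $\eta_j-\zeta_j=2(z-c_j)$ are a clean way to handle the bookkeeping in the Legendre step, which the paper does by direct expansion.
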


\begin{proof}
By \eqref{eqn:Symplectic2}, \eqref{eqn:dmu1} and \eqref{eqn:dmu2},
it is straightforward to get \eqref{eqn:Symplectic3}.
By \eqref{eqn:dmu2} we have
\ben
d\rho & = & \frac{xdx+ydy}{\rho}
= \frac{1}{V} \biggl( d\mu + \half \sum_{j=1}^n \frac{r_j+z-c_j}{r_j} d\mu_1\biggr),
\een
and so we have
\ben
g & = & \frac{1}{V}\biggl(d\varphi -\half \sum_{j=1}^n
\frac{xdy-ydx}{r_j(r_j-(z-c_j))}\biggr)^2 + V \cdot (d\rho^2+\rho^2d\theta_2^2+dz^2) \\
& = & \frac{1}{V}\biggl(d\theta_1 - \half \sum_{j=1}^n
\frac{\rho^2d\theta_2}{r_j(r_j-(z-c_j))}\biggr)^2 + V \cdot (d\rho^2+\rho^2d\theta_2^2+d\mu_1^2) \\
& = & \frac{1}{V}\biggl(d\theta_1 -\half \sum_{j=1}^n
\frac{\rho^2d\theta_2}{r_j(r_j-(z-c_j))}\biggr)^2 + V \rho^2d\theta_2^2 \\
& + & \frac{1}{V\rho^2} \biggl(d\mu_2+\half \sum_{j=1}^n \frac{r_j+z-c_j}{r_j} d\mu_1\biggr)^2
+ V d\mu_1^2 \\
& = & \frac{1}{V} d\theta_1^2 - \frac{1}{V}
\sum_{j=1}^n\frac{\rho^2}{r_j(r_j-(z-c_j))} d\theta_1 d\theta_2 \\
& + & \biggl[V \rho^2 +
\frac{1}{4V} \biggl(\sum_{j=1}^n\frac{\rho^2}{r_j(r_j-(z-c_j))}\biggr)^2 \biggr] d\theta_2^2 \\
& + & \biggl[V  +
\frac{\rho^2}{4V} \biggl(\sum_{j=1}^n\frac{1}{r_j(r_j-(z-c_j))}\biggr)^2 \biggr] d\mu_1^2 \\
& + &  \frac{1}{V} \sum_{j=1}^n\frac{1}{r_j(r_j-(z-c_j))} d\mu_1 d\mu_2
+ \frac{1}{V\rho^2} d\mu_2^2.
\een
The formulas for the complex potential and the K\"ahler potential will be proved
in the next two subsections.
\end{proof}

Similar to \cite{Zhou-Kepler},
write:
\be
g = \sum_{i,j=1}^2 (\frac{1}{2} G_{ij} d\mu_i d\mu_j +2 G^{ij}d\theta_id\theta_j),
\ee
where the coefficient matrices $(G_{ij})_{i,j=1, 2}$ and $(G^{ij})_{i,j=1,2}$ are given by:
\ben
(G_{ij})_{i,j=1, 2}
= \begin{pmatrix}
2V  +
\frac{\rho^2}{2V} \biggl(\sum_{j=1}^n\frac{\rho^2}{r_j(r_j-(z-c_j))}\biggr)^2 &
\frac{1}{V} \sum_{j=1}^n\frac{1}{r_j(r_j-(z-c_j))} \\
\frac{1}{V} \sum_{j=1}^n\frac{1}{r_j(r_j-(z-c_j))} &
\frac{2}{V\rho^2}
\end{pmatrix}
\een

\ben
( G^{ij})_{i,j=1,2}=
\begin{pmatrix}
\frac{1}{2V} & -\frac{1}{4V} \sum_{j=1}^n\frac{\rho^2}{r_j(r_j-(z-c_j))}  \\
-\frac{1}{4V} \sum_{j=1}^n\frac{\rho^2}{r_j(r_j-(z-c_j))}
& \frac{V \rho^2}{2} +
\frac{1}{8V} \biggl(\sum_{j=1}^n\frac{\rho^2}{r_j(r_j-(z-c_j))}\biggr)^2
\end{pmatrix}
\een
It is easy to see that these matrices are inverse to each other.

\subsection{Complex potential functions}

Next we will show that
\be
G_{ij} = \frac{\pd^2\psi}{\pd \mu_1\pd \mu_2}
\ee
for some function $\psi$.
To find $\psi$, we rewrite \eqref{eqn:dmu2} as follows:
\be \label{eqn:dmu2-2}
d\mu_2 = V \rho d\rho + \half \sum_{j=1}^n \frac{r_j+z-c_j}{r_j} dz.
\ee
From this we get:
\be \label{eqn:drho}
d\rho = \frac{1}{V\rho} \biggl(d\mu_2+\half \sum_{j=1}^n \frac{r_j+z-c_j}{r_j} d\mu_1\biggr).
\ee
It follows that
\be \label{eqn:Complex1}
\begin{split}
& \biggl[V +
\frac{\rho^2}{4V} \biggl(\sum_{j=1}^n\frac{1}{r_j(r_j-(z-c_j))}\biggr)^2 \biggr] d\mu_1
+ \half \frac{1}{V} \sum_{j=1}^n\frac{1}{r_j(r_j-(z-c_j))} d\mu_2 \\
= & -V dz + \frac{1}{2} \sum_{j=1}^n\frac{\rho d\rho}{r_j(r_j-(z-c_j))}
= \half d \sum_{j=1}^n  \log (r_j - (z-c_j)),
\end{split}
\ee
and
\be \label{eqn:Complex2}
\half \frac{1}{V}
\sum_{j=1}^n\frac{1}{r_j(r_j-(z-c_j))} d\mu_1  + \frac{1}{V\rho^2} d\mu_2
= \frac{d\rho}{\rho} =  d \log \rho.
\ee
Therefore,
one has
\begin{align} \label{eqn:partial-psi}
\frac{\pd \psi}{\pd \mu_1}
& = \sum_{j=1}^n  \log (r_j - (z-c_j)) + C_1, &
\frac{\pd \psi}{\pd \mu_2}
& = 2 \log \rho + C_2.
\end{align}
Furthermore,
\ben
&& \half \sum_{j=1}^n  \log (r_j - (z-c_j)) d\mu_1 + \log \rho d\mu_2 \\
& = & -\half \sum_{j=1}^n  \log (r_j - (z-c_j)) dz
+ V \rho \log \rho d\rho +  \half \sum_{j=1}^n \frac{r_j+z-c_j}{r_j} \log \rho dz \\
& = & \half \sum_{j=1}^n \biggl(\frac{r_j+z-c_j}{r_j} \log \rho
- \log (r_j - (z-c_j))  \biggr) dz + V \rho \log \rho d\rho \\
& = & \half d   \sum_{j=1}^n \biggl( (r_j + z-c_j) \log \rho
- (z-c_j) \log(r_j-(z-c_j)) - r_j \biggr).
\een
So we can take
\be
\psi =  \sum_{j=1}^n \biggl( (r_j + z-c_j) \log \rho
- (z-c_j) \log(r_j-(z-c_j)) - r_j \biggr) + C_1\mu_1 + C_2\mu_2,
\ee
so that the equalities in \eqref{eqn:partial-psi} hold.
We can write $\psi$ in another form. Since
\be
\sum_{j=1}^n r_j = 2\mu_2- \sum_{j=1}^n (z-c_j)
\ee
is linear in $\mu_1, \mu_2$,
we can take:
\be
\psi = \sum_{j=1}^n \biggl( (r_j + z-c_j) \log \rho
- (z-c_j) \log(r_j-(z-c_j)) \biggr).
\ee
Using the equalities
\be
\rho^2 = r_j^2 - (z-c_j)^2,
\ee
we have
\be \label{eqn:Complex}
\begin{split}
\psi & = \half \sum_{j=1}^n \biggl( (r_j + (z-c_j)) \log (r_j + (z-c_j)) \\
& + (r_j-(z-c_j)) \log(r_j-(z-c_j)) \biggr)
+ C_1 \mu_1 + C_2 \mu_2.
\end{split}
\ee

\subsubsection{The $n=1$ case} One can take $c_1=0$.
From the equation
\be
\sqrt{\rho^2+z^2} + z = 2 \mu_2
\ee
we can solve for $\rho^2$:
\be
\rho^2 = 4\mu_2 (\mu_2-z)
\ee
and
\be
r_1 = 2\mu_2-z.
\ee
The complex potential is
\be \label{eqn:Complex-n=1}
\psi = \mu_2\log \mu_2 + (\mu_2+\mu_1) \log(\mu_2+\mu_1) +C_1\mu_1 + C_2\mu_2
\ee
for some constants $C_1, C_2$,
and the K\"ahler potential is
\be
\psi^\vee = \mu_2 + (\mu_2+\mu_1) + C_1\mu_1+C_2\mu_2.
\ee

Make the following change of coordinates:
\begin{align}
y_1 & = \mu_2, &
y_2 & = \mu_2 +\mu_1.
\end{align}
Then the image of the moment map is changed from the convex cone defined by:
\begin{align}
\mu_2 & \geq 0, & \mu_2 +\mu_1 & \geq 0
\end{align}
to the convex cone defined by:
\begin{align}
y_1 & \geq 0, & y_2 & \geq 0
\end{align}
and the complex potential function becomes
\be
\psi = y_1\log y_1 + y_2 \log y_2 + C_1'y_1 + C_2'y_2
\ee
for some constants $C_1', C_2'$.

\subsubsection{The $n=2$ case}
From the equation
\be
\sqrt{\rho^2+(z-c_1)^2} + (z-c_1) + \sqrt{\rho^2 + (z-c_2)^2} + (z-c_2)
= 2 \mu_2
\ee
we can solve for $\rho^2$:
\be \label{rho2-in-mu}
\rho^2 = \frac{4\mu_2 (\mu_2-(z-c_1))(\mu_2-(z-c_2))(\mu_2-(z-c_1)-(z-c_2))}
{(2\mu_2 -(z-c_1)-(z-c_2))^2}.
\ee
From this one finds that
\bea
&& r_1
= \frac{2\mu_2(\mu_2-(z-c_1)-(z-c_2)) + (z-c_1)(z-c_1 + z-c_2)}{2\mu_2-(z-c_1)-(z-c_2)},
\label{eqn:r1-in-mu} \\
&& r_2 = \frac{2\mu_2(\mu_2-(z-c_1)-(z-c_2)) + (z-c_1)(z-c_1 + z-c_2)}{2\mu_2-(z-c_1)-(z-c_2)},
\label{eqn:r2-in-mu}
\eea
and by these the complex potential function becomes:
\be \label{eqn:Complex-n=2}
\begin{split}
\psi & = \mu_2 \log (\mu_2) + \sum_{j=1}^2 (\mu_2 -(z-c_j)) \log (\mu_2-(z-c_j))  \\
& + (\mu_2-z+c_1-z+c_2) \log (\mu_2 -z+c_1 -z+c_2) \\
& - (2\mu_2-z+c_1-z+c_2) \log (2\mu_2 -z+c_1 -z+c_2)\\
& + C_1\mu_1+C_2\mu_2
\end{split}
\ee
for some constants $C_1, C_2$.
We make the following change of coordinates:
\begin{align}
y_1 & = \mu_2, &
y_2 & = \mu_2 - 2z + c_1 + c_2, & b & = c_2 - c_1.
\end{align}
Then the image of the moment map is changed from the convex cone defined by:
\begin{align}
\mu_2 & \geq 0, & \mu_2 - (z-c_1) & \geq 0, & \mu_2 - (z-c_1) - (z-c_2) & \geq 0
\end{align}
to the convex cone defined by:
\begin{align}
y_1 & \geq 0, & y_1+y_2 & \geq b, & y_2 & \geq 0,
\end{align}
and up to a linear function in $y_1, y_2$,
\be
\psi = y_1 \log y_1 + y_2 \log y_2 - y \log y + \half (y -b) \log (y-b) + \half (y+b) \log (y+b),
\ee
where $y=y_1+y_2$,
up to a term of the form $C_1'y_1 + C_2'y_2$ for some constants $C_1', C_2'$.
Later we will fix these constants to be zero, so
this is a special case of
the following formula for the complex potential
of K\"ahler Ricci-flat metrics on $\cO_{\bP^{n-1}}(-n)$ derived in \cite[\S 7.1]{Zhou-Kepler}:
\be
\psi = \sum_{i=1}^n y_i (\ln y_i - 1)
- y (\ln y -1) + \frac{1}{n} \sum_{j=0}^{n-1} (y-b\xi_n^j) (\log (y- b\xi_n^j) - 1)-C.
\ee
For $n=2$, this gives us the Eguchi-Hanson metric \cite{Egu-Han}.

\subsection{Legendre transform and K\"ahler potential}

As in \cite{Zhou-Kepler},
the K\"ahler potentials of the toric Gibbons-Hawking metrics are:
\ben
\psi^\vee
& = & \frac{\pd \psi}{\pd \mu_1} \mu_1 + \frac{\pd \psi}{\pd \mu_2} \mu_2 - \psi \\
& = &  - \sum_{j=1}^n  \log (r_j - (z-c_j)) \cdot z  + 2 \log \rho \cdot \mu_2
+C_1\mu_1 + C_2\mu_2
- \psi \\
& = & -\sum_{j=1}^n  z \cdot \log (r_j - (z-c_j))
+  \sum_{j=1}^n (r_j+z-c_j)\cdot \log \rho  +C_1\mu_1 + C_2\mu_2\\
& - & \sum_{j=1}^n \biggl( (r_j + z-c_j) \log \rho
- (z-c_j) \log(r_j-(z-c_j)) \biggr) \\
& = & -\sum_{j=1}^n c_j \log(r_j-(z-c_j)) +C_1\mu_1 + C_2\mu_2.
\een

\section{Local Complex coordinates on Toric Gibbons-Hawking Spaces via Hessian Geometry}

\label{sec:Complex}

In this Section we study local complex coordinates and their relationships which arise naturally
from the point of Hessian geometry.
To arrive at the general case,
we need to first study the $n=1$ and $n=2$ cases in detail.

\subsection{Hessian local complex coordinates}

The function $\psi$ is called the {\em complex potential}
because one can find local complex coordinates $z_1$ and $z_2$ so that
\be
\frac{dz_i}{z_i} = \half \sum_{j=1}^2 \frac{\pd^2\psi}{\pd \mu_i \pd \mu_j} d\mu_j
+ \sqrt{-1} d\theta_i =  \half \sum_{i,j=1}^2 G_{ij} d\mu_j + \sqrt{-1} d \theta_i
\ee
is of type $(1, 0)$.
By \eqref{eqn:Complex1} and \eqref{eqn:Complex2},
we have
\bea
\frac{dz_1}{z_1} & = & 
\half d \sum_{j=1}^n  \log (r_j - (z-c_j))
+ \sqrt{-1} d\theta_1, \label{eqn:dz1/z1} \\
\frac{dz_2}{z_2} & = & d \log \rho + \sqrt{-1} d\theta_2 = d \log (x + y \sqrt{-1}).
\label{eqn:dz2/z2}
\eea
Therefore, we take
\bea
&& z_1 = \prod_{j=1}^n (r_j-(z-c_j))^{1/2}\cdot e^{\sqrt{-1} \theta_1},
\label{eqn:z1} \\
&& z_2 = x+ \sqrt{-1} y. \label{eqn:z2}
\eea

\subsection{Toric Gibbons-Hawking metrics and K\"ahler form
in Hessian local complex coordinates}

We now express Riemannian metric, symplectic form and K\"ahler potential
in terms of these local complex coordinates.
First of all,
by \eqref{eqn:dz1/z1} and \eqref{eqn:dz2/z2},
\bea
&& d\theta_1 = \frac{1}{2\sqrt{-1}}
\biggl( \frac{dz_1}{z_1} - \frac{d\bar{z}_1}{\bar{z}_1} \biggr), \\
&& d\theta_2 = \frac{1}{2\sqrt{-1}}
\biggl( \frac{dz_2}{z_2} - \frac{d\bar{z}_2}{\bar{z}_2} \biggr), \\
&& V dz - \frac{1}{2} \sum_{j=1}^n\frac{\rho d\rho}{r_j(r_j-(z-c_j))}
=  \Red{-} \frac{1}{2} \biggl( \frac{dz_1}{z_1} + \frac{d\bar{z}_1}{\bar{z}_1} \biggr), \\
&& d \log \rho = \frac{1}{2} \biggl( \frac{dz_2}{z_2} + \frac{d\bar{z}_2}{\bar{z}_2} \biggr).
\eea
From the last two equality we derive:
\be
d\mu_1 = -dz = \frac{1}{2V} \biggl( \frac{dz_1}{z_1} + \frac{d\bar{z}_1}{\bar{z}_1} \biggr)
-  \frac{1}{4V} \sum_{j=1}^n\frac{r_j+z-c_j}{r_j}
\biggl( \frac{dz_2}{z_2} + \frac{d\bar{z}_2}{\bar{z}_2} \biggr).
\ee
Combined with \eqref{eqn:dmu2}, we get:
\be
\begin{split}
d\mu_2 & = - \frac{1}{4V} \sum_{j=1}^n \frac{r_j+z-c_j}{r_j}
 \biggl( \frac{dz_1}{z_1} + \frac{d\bar{z}_1}{\bar{z}_1} \biggr) \\
& + \biggl[  \frac{V \rho^2}{2} + \frac{1}{8V}
\biggl(\sum_{j=1}^n \frac{r_j+z-c_j}{r_j} \biggr)^2 \biggr]
\biggl( \frac{dz_2}{z_2} + \frac{d\bar{z}_2}{\bar{z}_2} \biggr).
\end{split}
\ee

\begin{theorem}
The toric Gibbons-Hawking metrics and K\"ahler forms are given in complex coordinates $z_1, z_2$
as follows:
\be \label{eqn:g-in-complex}
\begin{split}
g = &   \frac{1}{V} \frac{dz_1}{z_1}\frac{d\bar{z}_1}{\bar{z}_1}
+ \frac{1}{2V} \sum_{j=1}^n\frac{r_j+z-c_j}{r_j}
\biggl( \frac{dz_1}{z_1} \frac{d\bar{z}_2}{\bar{z}_2}
+ \frac{dz_2}{z_2} \frac{d\bar{z}_1}{\bar{z}_1} \biggr) \\
+ & \biggl[ V \rho^2 + \frac{1}{4V}
\biggl( \sum_{j=1}^n \frac{r_j+z-c_j}{r_j}\biggr)^2 \biggr]\emph{}
\frac{dz_2}{z_2} \frac{d\bar{z}_2}{\bar{z}_2} ,
\end{split}
\ee
\be
\begin{split}
\omega & = \frac{1}{2\sqrt{-1}} \biggl(\frac{1}{V} \frac{dz_1}{z_1} \wedge
\frac{d\bar{z}_1}{\bar{z}_1} \\
& + \frac{1}{2V} \sum_{j=1}^n\frac{r_j+z-c_j}{r_j}
\biggl( \frac{dz_1}{z_1} \wedge \frac{d\bar{z}_2}{\bar{z}_2}
+ \frac{dz_2}{z_2}\wedge  \frac{d\bar{z}_1}{\bar{z}_1} \biggr) \\
& + \biggl[ V \rho^2 + \frac{1}{4V}
\biggl( \sum_{j=1}^n \frac{r_j+z-c_j}{r_j}\biggr)^2 \biggr]
\frac{dz_2}{z_2} \wedge \frac{d\bar{z}_2}{\bar{z}_2} \biggr).
\end{split}
\ee
\end{theorem}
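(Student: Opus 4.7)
The theorem re-expresses the already-known symplectic-coordinate formulas \eqref{eqn:Symplectic3}--\eqref{eqn:g-Hessian} for $\omega$ and $g$ in the complex coordinates $z_1,z_2$ from \eqref{eqn:z1}--\eqref{eqn:z2}. My plan is a direct substitution, organized to exploit the Hessian structure rather than expand six quadratic terms by hand.

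First I would collect the four displayed identities immediately preceding the theorem, which give $d\theta_1,d\theta_2,d\mu_1,d\mu_2$ as explicit real/imaginary combinations of $\frac{dz_i}{z_i}$ and $\frac{d\bar z_i}{\bar z_i}$, simplified by the elementary identity $\rho^2=r_j^2-(z-c_j)^2$ so that $\frac{\rho^2}{r_j(r_j-(z-c_j))}=\frac{r_j+z-c_j}{r_j}$. Then I would verify, once and for all, the general Hessian-geometric identities
\be
g=2\sum_{i,j}G^{ij}\,\frac{dz_i}{z_i}\cdot\frac{d\bar z_j}{\bar z_j},\qquad
\omega=\sqrt{-1}\sum_{i,j}G^{ij}\,\frac{dz_i}{z_i}\wedge\frac{d\bar z_j}{\bar z_j},
\ee
which follow in a few lines from the defining relation $\frac{dz_i}{z_i}=\tfrac12\sum_j G_{ij}\,d\mu_j+\sqrt{-1}\,d\theta_i$ together with the contraction $\sum_k G^{ik}G_{kj}=\delta^i_j$; the symmetry of $(G^{ij})$ automatically kills pure holomorphic and antiholomorphic pieces, reconfirming that $\omega$ is of type $(1,1)$. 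At this point the theorem reduces to reading off the inverse-Hessian matrix $(G^{ij})$ already displayed after the proof of the main proposition, namely $G^{11}=\tfrac{1}{2V}$, $G^{12}=-\tfrac{1}{4V}\sum_j\tfrac{r_j+z-c_j}{r_j}$, and $G^{22}=\tfrac{V\rho^2}{2}+\tfrac{1}{8V}\bigl(\sum_j\tfrac{r_j+z-c_j}{r_j}\bigr)^2$, and multiplying by the appropriate factor of $2$ (respectively $\sqrt{-1}$).

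The main obstacle is clerical rather than conceptual: one must maintain consistent sign and symmetry conventions, especially for the off-diagonal $G^{12}$, whose sign tracks the parenthetical red $-$ in the third displayed equation before the theorem, and interpret $d\theta_i\,d\theta_j$, $d\mu_i\,d\mu_j$ as symmetric tensor products when summing over all index pairs $(i,j)$. With that bookkeeping settled, the computation is a routine exercise in linear algebra whose outcome matches \eqref{eqn:g-in-complex} and the displayed expression for $\omega$ exactly.
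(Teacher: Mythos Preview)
Your proposal is correct and takes a genuinely different route from the paper. The paper proceeds by brute-force substitution: it plugs the expressions for $d\theta_i$, $d\mu_i$, $dz$, $d\rho$ directly into the original form
\[
g=\frac{1}{V}\Bigl(d\theta_1-\tfrac12\sum_j\tfrac{\rho^2}{r_j(r_j-(z-c_j))}\,d\theta_2\Bigr)^2+V(d\rho^2+\rho^2 d\theta_2^2+dz^2)
\]
and into $\omega=d\mu_1\wedge d\theta_1+d\mu_2\wedge d\theta_2$, then expands and simplifies term by term. Your argument instead isolates the one-line general lemma
\[
g=2\sum_{i,j}G^{ij}\,\frac{dz_i}{z_i}\,\frac{d\bar z_j}{\bar z_j},\qquad
\omega=\sqrt{-1}\sum_{i,j}G^{ij}\,\frac{dz_i}{z_i}\wedge\frac{d\bar z_j}{\bar z_j},
\]
valid for any toric K\"ahler metric in its Hessian complex coordinates, and then simply reads off the already-computed $(G^{ij})$. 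This is cleaner and explains \emph{why} the answer is a quadratic form in $\frac{dz_i}{z_i}$ with coefficients given by the inverse Hessian; the paper's computation, by contrast, verifies it without invoking that structure. Your caution about the off-diagonal sign is also well placed: the paper's own displayed statement and proof differ in the sign of the cross term, and your approach makes transparent which sign is consistent with the recorded $(G^{ij})$.
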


\begin{proof}
These can verified by straightforward computations as follows:
\ben
g & = & \frac{1}{V}\biggl(d\theta_1 - \half \sum_{j=1}^n
\frac{\rho^2d\theta_2}{r_j(r_j-(z-c_j))}\biggr)^2 + V \cdot (d\rho^2+\rho^2d\theta_2^2+dz^2) \\
& = & \frac{1}{V}(d\theta_1)^2
- \frac{1}{V} \sum_{j=1}^n\frac{r_j+z-c_j}{r_j} d\theta_1d\theta_2
+ \frac{1}{4V}\biggl( \sum_{j=1}^n
\frac{r_j+z-c_j}{r_j}\biggr)^2 (d\theta_2)^2 \\
& + & Vdz_2 d\bar{z}_2
+ V \biggl[\frac{1}{2V} \biggl( \frac{dz_1}{z_1} + \frac{d\bar{z}_1}{\bar{z}_1} \biggr)
- \frac{1}{4V} \sum_{j=1}^n\frac{r_j+z-c_j}{r_j}
\biggl( \frac{dz_2}{z_2} + \frac{d\bar{z}_2}{\bar{z}_2} \biggr) \biggr]^2 \\
& = & \frac{1}{V} \frac{dz_1}{z_1}\frac{d\bar{z}_1}{\bar{z}_1}
- \frac{1}{2V} \sum_{j=1}^n\frac{r_j+z-c_j}{r_j}
\biggl( \frac{dz_1}{z_1} \frac{d\bar{z}_2}{\bar{z}_2}
+ \frac{dz_2}{z_2} \frac{d\bar{z}_1}{\bar{z}_1} \biggr) \\
& + & \biggl[ V \rho^2 + \frac{1}{4V}
\biggl( \sum_{j=1}^n \frac{r_j+z-c_j}{r_j}\biggr)^2 \biggr]
\frac{dz_2}{z_2} \frac{d\bar{z}_2}{\bar{z}_2},
\een

\ben
\omega
& = & d \mu_1  \wedge d\theta_1 +  d \mu_2  \wedge d\theta_2 \\
& = & - \biggl[ \frac{1}{2V} \biggl( \frac{dz_1}{z_1} + \frac{d\bar{z}_1}{\bar{z}_1} \biggr)
-  \frac{1}{4V} \sum_{j=1}^n\frac{r_j+z-c_j}{r_j}
 \biggl( \frac{dz_2}{z_2} + \frac{d\bar{z}_2}{\bar{z}_2} \biggr)\biggr]
\wedge  \frac{1}{2\sqrt{-1}}
\biggl( \frac{dz_1}{z_1} - \frac{d\bar{z}_1}{\bar{z}_1} \biggr) \\
& + &  \biggl[ - \frac{1}{4V} \sum_{j=1}^n \frac{r_j+z-c_j}{r_j}
 \biggl( \frac{dz_1}{z_1} + \frac{d\bar{z}_1}{\bar{z}_1} \biggr) \\
& + & \biggl[  \frac{V \rho^2}{2} + \frac{1}{8V}
\biggl(\sum_{j=1}^n \frac{r_j+z-c_j}{r_j} \biggr)^2 \biggr]
\biggl( \frac{dz_2}{z_2} + \frac{d\bar{z}_2}{\bar{z}_2} \biggr)\biggr]
\wedge \frac{1}{2\sqrt{-1}}
\biggl( \frac{dz_2}{z_2} - \frac{d\bar{z}_2}{\bar{z}_2} \biggr) \\
& = & \frac{1}{2\sqrt{-1}} \biggl[ \frac{1}{V}
\frac{dz_1}{z_1} \wedge \frac{d\bar{z}_1}{\bar{z}_1}
- \frac{1}{2V} \sum_{j=1}^n\frac{r_j+z-c_j}{r_j}
\biggl(\frac{dz_1}{z_1} \wedge \frac{d\bar{z}_2}{\bar{z}_2}
+ \frac{dz_2}{z_2} \wedge \frac{d\bar{z}_1}{\bar{z}_1}  \biggr) \\
& + & \biggl(  V \rho^2 + \frac{1}{4V}
\biggl(\sum_{j=1}^n \frac{r_j+z-c_j}{r_j} \biggr)^2 \biggr)
\frac{dz_2}{z_2} \wedge \frac{d\bar{z}_2}{\bar{z}_2} \biggr].
\een
\end{proof}

\subsection{The $n=1$ case}
From
\begin{align}
z_1 & = ((x^2+y^2+z^2)^{1/2}-z)^{1/2}\cdot e^{\sqrt{-1} \theta_1},
& z_2 & = x+ \sqrt{-1} y.
\end{align}
we get:
\begin{align}
\rho^2 & = |z_2|^2, & r & = \half(|z_1|^{-2}|z_2|^2+|z_1|^{2}), &
z & = \frac{1}{2}(|z_1|^{-2}|z_2|^2-|z_1|^{2}).
\end{align}
And so
\be
\mu_2 = \half(r+z) = \half |z_1|^{-2}|z_2|^2.
\ee
When $z_2=0$,
we have
\begin{align}
\mu_2 & = 0, & \mu_1 & = - z = \half |z_1|^2 > 0.
\end{align}

By \eqref{eqn:g-in-complex} we then have:
\ben
g & = & (1+ |z_1|^{-4}|z_2|^2) dz_1  d \bar{z}_1
- |z_1|^{-2}|z_2|^2 \biggl( \frac{dz_1}{z_1} \frac{d\bar{z}_2}{\bar{z}_2}
+ \frac{dz_2}{z_2} \frac{d\bar{z}_1}{\bar{z}_1} \biggr) \\
& + & \biggl(\frac{|z_2|^2}{|z_1|^{-2}|z_2|^2+|z_1|^{2}}
+  \frac{|z_1|^{-4}|z_2|^4}{|z_1|^{-2}|z_2|^2+|z_1|^2}
\biggr)\frac{dz_2}{z_2} \frac{d\bar{z}_2}{\bar{z}_2}
\een

\subsubsection{The $(\alpha, \beta)$-coordinates}

Make the following change of variables:
\be \label{eqn:Alpha-Beta}
z_1 = \beta,  \;\;\;\;\; z_2 = \alpha\beta.
\ee
The Gibbons-Hawking metric becomes:
\ben
g & = &  (1+ |\beta|^{-4}|\alpha\beta|^2) d\beta  d \bar{\beta} \\
& - &  |\beta|^{-2}|\alpha\beta|^2
\biggl( \frac{d\beta}{\beta}
\biggl(\frac{d\bar{\alpha}}{\bar{\alpha}}
+  \frac{d\bar{\beta}}{\bar{\beta}}  \biggr)
+ \biggl(\frac{d\alpha}{\alpha}
+ \frac{d\beta}{\beta}  \biggr)
\frac{d\bar{\beta}}{\bar{\beta}} \biggr) \\
& + &  |\alpha\beta|^2|\beta|^{-2}
\biggl(\frac{d\alpha}{\alpha} + \frac{d\beta}{\beta}  \biggr)
\biggl(\frac{d\bar{\alpha}}{\bar{\alpha}}
+  \frac{d\bar{\beta}}{\bar{\beta}}  \biggr) \\
& = & d\alpha d\bar{\alpha} + d\beta d\bar{\beta}.
\een
The K\"ahler form is
\be
\omega = \frac{\sqrt{-1}}{2} (d\alpha \wedge d\bar{\alpha}
+ d\beta \wedge d\bar{\beta}),
\ee
and the K\"ahler potential can be taken to be
\be
\psi^\vee = \half |\alpha|^2+\half |\beta|^2
\ee
One can check that
\begin{align}
x + y \sqrt{-1} & = \alpha \beta, & z & =\half (|\alpha|^2 - |\beta|^2), &
\mu_2 & = \half |\alpha|^2.
\end{align}
And so we get
$$\psi^\vee = \mu_2 + (\mu_2+\mu_1).$$
This fixes the constants $C_1=C_2 = 0$ in \eqref{eqn:Complex-n=1}.
Also note
\begin{align*}
z|_{\alpha = 0}  & = - \half |\beta|^2 \leq 0, &
\mu_2|_{\alpha = 0} & = 0,
\end{align*}
and so the $\beta$-axis in the $(\alpha, \beta)$-plane is mapped
by the moment map to
$$L_1= \{(\mu_1, \mu_2)\;|\; \mu_1 \geq 0, \mu_2 =0 \};$$
similarly,
\begin{align*}
z|_{\beta = 0}  & = \half |\alpha|^2 \geq 0, & \mu_2|_{\beta=0} = \half |\alpha|^2 \geq 0,
\end{align*}
i.e., the $\alpha$-axis in the $(\alpha, \beta)$-plane is mapped to
$$L_2 =\{(\mu_1, - \mu_1)\;|\; \mu_1 \leq 0\}.$$

\subsubsection{Change of Hessian local complex coordinates in the $n=1$ case}

In the above we have focused on the local coordinate patch over $U = \bR^3 -
\{ (0, 0, z) \;|\; z \geq 0 \}$,
now we switch the local coordinate patch over $\tilde{U} = \bR^3 - \{ (0,0,z) \;|\; z \leq 0\}$
where we have
\be
g = \frac{1}{V}(d\tilde{\varphi} + \tilde{\alpha})^2 + V \cdot (dx^2+dy^2+dz^2).
\ee
Recall in \eqref{eqn:tildeAlpha-alpha} we have seen that $\tilde{\alpha} = \alpha + \theta_2$,
there we can take
\be
\tilde{\varphi} = \varphi - \arctan\frac{y}{x} = \varphi -\emph{} \theta_2,
\ee
so that $d\tilde{\varphi} + \tilde{\alpha} = d \varphi + \alpha$.
Now the symplectic form can be written as
\ben
\omega & = & (d\tilde{\varphi}+\tilde{\alpha}) \wedge d z + V d x \wedge dy
= d\tilde{\mu}_1 \wedge d\tilde{\theta}_1 + d \tilde{\mu}_2 \wedge d \tilde{\theta}_2,
\een
where we can take
\begin{align}
\tilde{\theta}_1 & = \tilde{\varphi} = \theta_1-\theta_2,
& \tilde{\theta}_2 & = \theta_2 =  \arctan \frac{y}{x}, \\
\tilde{\mu}_1 & = \mu_1 = - z,   &
\tilde{\mu}_2 & = \mu_2 - z =  \half (r - z).
\end{align}
Indeed
\ben
\omega & = & d\varphi \wedge dz + d \mu_2 \wedge d\theta_2  \\
& = & (d\tilde{\varphi} + d\tilde{\theta}_2) \wedge dz + d \mu_2 \wedge d\tilde{\theta}_2 \\
& = & -dz \wedge d \tilde{\varphi} +  d (\mu_2 - z) \wedge  d \tilde{\theta}_2.
\een
From the complex potential
 one can find local complex coordinates $\tilde{z}_1$ and $\tilde{z}_2$ so that
\be
\half \sum_{j=1}^2 \frac{\pd^2\psi}{\pd \tilde{\mu}_i \pd \tilde{\mu}_j} d\tilde{\mu}_j
+ \sqrt{-1} d\tilde{\theta}_i = \frac{d\tilde{z}_i}{\tilde{z}_i}.
\ee
By a direct computation,
we have
\bea
\frac{d\tilde{z}_1}{\tilde{z}_1} & = & \frac{dz_1}{z_1} - \frac{dz_2}{z_2}, \\
\frac{d\tilde{z}_2}{\tilde{z}_2} & = & \frac{dz_2}{z_2}.
\eea
Therefore, we may take
\begin{align}
 \tilde{z}_1 & = \frac{z_1}{z_2}, & \tilde{z}_2 & = z_2.
\end{align}
By \eqref{eqn:Alpha-Beta},
the relationship between the $(\tilde{z}_1, \tilde{z}_2)$-coordinates
and the $(\alpha, \beta)$-coordinates is given by:
\be \label{eqn:Alpha-Beta-tilde}
\tilde{z}_1 = \alpha^{-1},  \tilde{z}_2 = \alpha\beta.
\ee
Note we have
\begin{align}
\mu_2 & = \half |\tilde{z}_1|^{-2}, &
z & = \frac{1}{2}(|\tilde{z}_1|^{-2}-|\tilde{z}_1|^2|\tilde{z}_2|^2).
\end{align}
Therefore, when $\tilde{z}_2 =0$,
\be
\mu_2 = -\mu_1 = z = |\tilde{z}_1|^{-2} \geq 0.
\ee

\subsection{The $n=2$ case}

Now we generalize the results in the case of $n=1$ to $n=2$.
From the equations
\begin{align}
z_1 & = \prod_{j=1}^2 (r_j-(z-c_j))^{1/2}\cdot e^{\sqrt{-1} \theta_1},
& z_2 & = x + \sqrt{-1} y
\end{align}
we solve for $z$ and get two candidate solutions:
\ben
&& z - c_1 = \half (c_2-c_1) \pm \frac{|z_1|^2-|z_2|^2}{2|z_1|(|z_1|^2+|z_2|^2)}
\sqrt{(|z_1|^2+|z_2|^2)^2+(c_2-c_1)^2|z_1|^2}.
\een
One can fix one solution by requiring that
as $x\to 0$ and $y\to 0$, $z$ tends to a  number $<c_1$,
since we are working over the domain $U = \bR^3 - \{(0,0,z)\;|\; z \geq c_1\}$,
so we have
\be \label{eqn:z-z1z2}
z - c_1 = \half (c_2-c_1) - \frac{|z_1|^2-|z_2|^2}{2|z_1|(|z_1|^2+|z_2|^2)} \sqrt{R}
\ee
where $R$ is defined by:
\be \label{eqn:R}
R:= (|z_1|^2+|z_2|^2)^2+(c_2-c_1)^2|z_1|^2.
\ee
From this one can check that:
\bea
&& r_1 = (c_1-c_2) \frac{|z_1|^2-|z_2|^2}{2(|z_1|^2+|z_2|^2)}
+ \frac{1}{2|z_1|} \sqrt{R}, \label{eqn:r1}  \\
&& r_2 = (c_2-c_1) \frac{|z_1|^2-|z_2|^2}{2(|z_1|^2+|z_2|^2)}
+ \frac{1}{2|z_1|} \sqrt{R}. \label{eqn:r2}
\eea
It follows that
\ben
&& \frac{1}{r_1} = \frac{2((|z_1|^4-|z_2|^4)|z_1|^2c+|z_1|(|z_1|^2+|z_2|^2)^2\sqrt{R} )}
{(|z_1|^2+|z_2|^2)^4+4|z_1|^4|z_2|^2c^2}, \\
&& \frac{1}{r_2} = \frac{2(-(|z_1|^4-|z_2|^4)|z_1|^2c+|z_1|(|z_1|^2+|z_2|^2)^2\sqrt{R} )}
{(|z_1|^2+|z_2|^2)^4+4|z_1|^4|z_2|^2c^2}.
\een
So we have:
\be
V = \frac{1}{2r_1} +\frac{1}{2r_2} = \frac{2|z_1|(|z_1|^2+|z_2|^2)^2\sqrt{R}}
{(|z_1|^2+|z_2|^2)^4+4|z_1|^4|z_2|^2c^2}.
\ee
We also have
\ben
&& r_1 +z-c_1 =   \frac{(c_2-c_1)|z_2|^2}{|z_1|^2+|z_2|^2}
+ \frac{|z_2|^2}{|z_1|(|z_1|^2+|z_2|^2)} \sqrt{R},   \\
&& r_2 +z-c_2 =   \frac{(c_1-c_2)|z_2|^2}{|z_1|^2+|z_2|^2}
+ \frac{|z_2|^2}{|z_1|(|z_1|^2+|z_2|^2)} \sqrt{R},   \\
&& \sum_{j=1}^2 \frac{r_j +z-c_j}{r_j}
= \frac{4|z_2|^2 [(|z_1|^2+|z_2|^2)^3+2|z_1|^4c^2]}{(|z_1|^2+|z_2|^2)^4+4|z_1|^4|z_2|^2c^2}.
\een
Therefore,
\ben
&& V \rho^2 + \frac{1}{4V}
\biggl( \sum_{j=1}^n \frac{r_j+z-c_j}{r_j}\biggr)^2 \\
& = & V^2\rho^2 + \frac{1}{4}
\biggl(\frac{4|z_2|^2 [(|z_1|^2+|z_2|^2)^3+2|z_1|^4c^2]}{(|z_1|^2+|z_2|^2)^4+4|z_1|^4|z_2|^2c^2} \biggr)^2\\
& = & |z_2|^2
\biggl(\frac{2|z_1|(|z_1|^2+|z_2|^2)^2\sqrt{R}}
{(|z_1|^2+|z_2|^2)^4+4|z_1|^4|z_2|^2c^2}\biggr)^2 \\
&& + \frac{1}{4} \biggl(\frac{4|z_2|^2 [(|z_1|^2+|z_2|^2)^3+2|z_1|^4c^2]}{(|z_1|^2+|z_2|^2)^4+4|z_1|^4|z_2|^2c^2}\biggr)^2 \\
& = & \frac{4|z_2|^2[(|z_1|^2+|z_2|^2)^3+c^2|z_1|^4]}{(|z_1|^2+|z_2|^2)^4+4|z_1|^4|z_2|^2c^2}.
\een

By \eqref{eqn:g-in-complex} we then have:
\ben
g & = & \frac{(|z_1|^2+|z_2|^2)^4+4|z_1|^4|z_2|^2c^2}{2|z_1|(|z_1|^2+|z_2|^2)^2\sqrt{R}}
\frac{dz_1}{z_1}  \frac{d \bar{z}_1}{\bar{z}_1} \\
& - & \frac{|z_2|^2 [(|z_1|^2+|z_2|^2)^3+2|z_1|^4c^2]}{|z_1|(|z_1|^2+|z_2|^2)^2\sqrt{R}}
\biggl( \frac{dz_1}{z_1} \frac{d\bar{z}_2}{\bar{z}_2}
+ \frac{dz_2}{z_2} \frac{d\bar{z}_1}{\bar{z}_1} \biggr) \\
& + &
\frac{2|z_2|^2[(|z_1|^2+|z_2|^2)^3+c^2|z_1|^4]}{|z_1|(|z_1|^2+|z_2|^2)^2\sqrt{R}}
\frac{dz_2}{z_2} \frac{d\bar{z}_2}{\bar{z}_2}.
\een

\subsubsection{The $(\alpha, \beta)$-coordinates}

As in the $n=1$ case, make the following change of variables:
\be \label{eqn:Alpha-Beta2}
z_1 = \beta,  z_2 = \alpha\beta.
\ee
The Gibbons-Hawking metric becomes:
\ben
g & = &  \frac{|\beta|^2(1+|\alpha|^2)^4+4|\alpha|^2c^2}
{2(1+|\alpha|^2)^2\sqrt{|\beta|^2(1+|\alpha|^2)+c^2}}
\frac{d\beta}{\beta}  \frac{d \bar{\beta}}{\bar{\beta}} \\
& - & \frac{|\alpha|^2(|\beta|^2(1+|\alpha|^2)^3+2c^2)}
{(1+|\alpha|^2)^2\sqrt{|\beta|^2(1+|\alpha|^2)^2+c^2}}
\biggl( \frac{d\beta}{\beta}
\biggl(\frac{d\bar{\alpha}}{\bar{\alpha}}
+  \frac{d\bar{\beta}}{\bar{\beta}}  \biggr)
+ \biggl(\frac{d\alpha}{\alpha}
+ \frac{d\beta}{\beta}  \biggr)
\frac{d\bar{\beta}}{\bar{\beta}} \biggr) \\
& + &
\frac{2|\alpha|^2(|\beta|^2(1+|\alpha|^2)^3+c^2)}{(1+|\alpha|^2)^2\sqrt{|\beta|^2(1+|\alpha|^2)^2+c^2}}
\biggl(\frac{d\alpha}{\alpha} + \frac{d\beta}{\beta}  \biggr)
\biggl(\frac{d\bar{\alpha}}{\bar{\alpha}}
+  \frac{d\bar{\beta}}{\bar{\beta}}  \biggr) \\
& = & \frac{2|\alpha|^2(|\beta|^2(1+|\alpha|^2)^3+c^2)}{(1+|\alpha|^2)^2\sqrt{|\beta|^2(1+|\alpha|^2)^2+c^2}}
d\alpha d\bar{\alpha} \\
& + &\frac{|\alpha|^2|\beta|^2(1+|\alpha|^2)}{\sqrt{|\beta|^2(1+|\alpha|^2)^2+c^2}}
\biggl(\frac{d\alpha}{\alpha} \frac{d\bar{\beta}}{\bar{\beta}}
+ \frac{d\beta}{\beta} \frac{d\bar{\alpha}}{\bar{\alpha}} \biggr) \\
& + &  \frac{(1+|\alpha|^2)^2}{2\sqrt{|\beta|^2(1+|\alpha|^2)^2+c^2}}
 d\beta d\bar{\beta}.
\een
And so the K\"ahler form becomes:
\ben
\omega
& = & \frac{1}{2\sqrt{-1}} \biggl\{\frac{2|\alpha|^2(|\beta|^2(1+|\alpha|^2)^3+c^2)}{(1+|\alpha|^2)^2\sqrt{|\beta|^2(1+|\alpha|^2)^2+c^2}}
d\alpha \wedge d\bar{\alpha} \\
& + &\frac{|\alpha|^2|\beta|^2(1+|\alpha|^2)}{\sqrt{|\beta|^2(1+|\alpha|^2)^2+c^2}}
\biggl(\frac{d\alpha}{\alpha} \wedge \frac{d\bar{\beta}}{\bar{\beta}}
+ \frac{d\beta}{\beta}  \wedge \frac{d\bar{\alpha}}{\bar{\alpha}} \biggr) \\
& + &  \frac{(1+|\alpha|^2)^2}{2\sqrt{|\beta|^2(1+|\alpha|^2)^2+c^2}}
 d\beta \wedge d\bar{\beta} \biggr\}.
\een
This matches with \cite{Duan-Zhou1} and \cite{Zhou-Kepler}.
The K\"ahler potential is
\be \label{eqn:Kahler-pot-n=2}
\psi^\vee = c \log \frac{\sqrt{|\beta|^2(1+|\alpha|^2)^2+c^2}-c}{1+|\alpha|^2}
+ \sqrt{|\beta|^2(1+|\alpha|^2)^2+c^2}.
\ee
In the next subsection we will use this to fix the constants in \eqref{eqn:Kahler}.

\subsubsection{Moment map in $(\alpha, \beta)$-coordinates}

We express many quantities in terms of these new variables:
\bea
&& z  =  \half (c_1+c_2) - \frac{1-|\alpha|^2}{2(1+|\alpha|^2)}
\sqrt{|\beta|^2(1+|\alpha|^2)^2+(c_2-c_1)^2}, \\
&& r_1 = (c_1-c_2) \frac{1-|\alpha|^2}{2(1+|\alpha|^2)} + \half
\sqrt{|\beta|^2(1+|\alpha|^2)^2+(c_2-c_1)^2}, \\
&& r_2 = -(c_1-c_2) \frac{1-|\alpha|^2}{2(1+|\alpha|^2)} + \half
\sqrt{|\beta|^2(1+|\alpha|^2)^2+(c_2-c_1)^2}.
\eea
From these identities we easily get:
\ben
&& \mu_2  = \half\sum_{j=1}^2 (r_j+z-c_j)
= \frac{|\alpha|^2}{1+|\alpha|^2} \sqrt{|\beta|^2(1+|\alpha|^2)^2+c^2},  \\
&& \mu_2-(z-c_1) = - \frac{c}{2} + \half \sqrt{|\beta|^2(1+|\alpha|^2)^2+c^2},  \\
&& \mu_2-(z-c_2) =  + \frac{c}{2} + \half \sqrt{|\beta|^2(1+|\alpha|^2)^2+c^2}, \\
&& \mu_2-(z-c_1)-(z-c_2) = \frac{1}{1+|\alpha|^2} \sqrt{|\beta|^2(1+|\alpha|^2)^2+c^2}, \\
&& 2\mu_2-(z-c_1)-(z-c_2) = \sqrt{|\beta|^2(1+|\alpha|^2)^2+c^2},
\een
where $c$ is defined by $c:=c_2-c_1$.
It follows that  the image of moment map is given by
\begin{align}
\mu_2 & \geq 0, & \mu_2 + \mu_1 + c_1 & \geq 0, &
\mu_2 + (\mu_1+c_1) + (\mu_1+c_2) & \geq 0.
\end{align}
It is convex domain whose boundary consists of  three linear pieces:
\ben
&& L_1 = \{(\mu_1, 0)\;|\; \mu_1 \geq -c_1\}, \\
&& L_2 = \{(\mu_1, -\mu_1-c_1)\;|;\ -c_2 \leq \mu_1 \leq -c_1\}, \\
&& L_3 = \{(\mu_1, -2\mu_1-c_1-c_2)\;|\; \mu_1 \leq -c_2\}.
\een
Note when $\alpha = 0$,
\ben
&& z|_{\alpha = 0}  = \half (c_1+c_2) - \frac{1}{2}
\sqrt{|\beta|^2+(c_2-c_1)^2} \in (-\infty, c_1], \\
&& \mu_2|_{\alpha=0}= 0,
\een
and so the $\beta$-axix in the $(\alpha,\beta)$-plane is mapped to $L_1$.
Similarly,
\ben
&& z|_{\beta = 0} = \half (c_1+c_2) - \frac{1-|\alpha|^2}{2(1+|\alpha|^2)} (c_2-c_1)
\in [c_1, c_2), \\
&& \mu_2|_{\beta=0} = \frac{|\alpha|^2}{1+|\alpha|^2} (c_2-c_1),
\een
and so
\ben
(\mu_1+\mu_2)|_{\beta = 0} = -c_1,
\een
and so the $\alpha$-axis in the $(\alpha,\beta)$-plane is mapped to $L_2$.

Note now we have
\bea
&& r_1-(z-c_1) = \frac{\sqrt{|\beta|^2(1+|\alpha|^2)^2+c^2}-c}{1+|\alpha|^2}, \\
&& r_2-(z-c_2) = \frac{\sqrt{|\beta|^2(1+|\alpha|^2)^2+c^2}+c}{1+|\alpha|^2}.
\eea
Therefore, by \eqref{eqn:Kahler},
\ben
\psi^\vee& = & - c_1 \log \frac{\sqrt{|\beta|^2(1+|\alpha|^2)^2+c^2}-c}{1+|\alpha|^2}
- c_2 \log \frac{\sqrt{|\beta|^2(1+|\alpha|^2)^2+c^2}+c}{1+|\alpha|^2}\\
& - & C_1\biggl(\half (c_1+c_2) - \frac{1-|\alpha|^2}{2(1+|\alpha|^2)}
\sqrt{|\beta|^2(1+|\alpha|^2)^2+(c_2-c_1)^2} \biggr) \\
& + & C_2\frac{|\alpha|^2}{1+|\alpha|^2} \sqrt{|\beta|^2(1+|\alpha|^2)^2+c^2},
\een
and so by comparing with \eqref{eqn:Kahler-pot-n=2},
$C_1=C_2=2$ in  \eqref{eqn:Kahler}.

\subsubsection{Change of Hessian local complex coordinates in the $n=2$ case}

In the above we have focused on the local coordinate patch over $U = \bR^3 -
\{ (0, 0, z) \;|\; z \geq c_1 \}$,
now we switch the local coordinate patch over $\tilde{U} = \bR^3 - \{ (0,0,z) \;|\; z \leq c_2\}$
where we have
\ben
&& g = \frac{1}{V}(d\tilde{\varphi} + \tilde{\alpha})^2 + V \cdot (dx^2+dy^2+dz^2).
\een
By \eqref{eqn:tildeAlpha-Alpha-n}, $\tilde{\alpha} = \alpha + 2 \theta$,
so we can take
\be
\tilde{\varphi} = \varphi -2 \arctan\frac{y}{x} = \varphi - 2 \theta_2,
\ee
and so as in the $n=1$ case,
\begin{align}
\tilde{\theta}_1 & = \tilde{\varphi} = \theta_1-2\theta_2,
& \tilde{\mu}_1 & = \mu_1 = - z, \\
\tilde{\theta}_2 & = \theta_2 =  \arctan \frac{y}{x}, &
\tilde{\mu}_2 & = \mu_2 + 2 z =  \half\sum_{j=1}^n (r_j+z-c_j) +2 z.
\end{align}
The complex potential becomes
\be
\psi = (\tilde{\mu}_2+2\tilde{\mu}_1)\log (\tilde{\mu}_2+2\tilde{\mu}_1)
+ \tilde{\mu}_2 \log\tilde{\mu}_2.
\ee
From the complex potential
 one can find local complex coordinates $\tilde{z}_1$ and $\tilde{z}_2$ so that
\be
\half \sum_{j=1}^2 \frac{\pd^2\psi}{\pd \tilde{\mu}_i \pd \tilde{\mu}_j} d\tilde{\mu}_j
+ \sqrt{-1} d\tilde{\theta}_i = \frac{d\tilde{z}_i}{\tilde{z}_i}.
\ee
By a direct computation,
we have
\bea
\frac{d\tilde{z}_1}{\tilde{z}_1} & = & \frac{dz_1}{z_1} - 2\frac{dz_2}{z_2}, \\
\frac{d\tilde{z}_2}{\tilde{z}_2} & = & \frac{dz_2}{z_2}.
\eea
Therefore, we may take
\begin{align}
 \tilde{z}_1 & = \frac{z_1}{z_2^2}, & \tilde{z}_2 & = z_2.
\end{align}
By \eqref{eqn:Alpha-Beta-tilde},
introduce $(\tilde{\alpha}, \tilde{\beta})$-coordinates by:
\be \label{eqn:Alpha-Beta-tilde2}
\tilde{z}_1 = \tilde{\alpha}^{-1},  \tilde{z}_2 = \tilde{\alpha} \tilde{\beta}.
\ee
Together with \eqref{eqn:Alpha-Beta2},
we derive the following relations:
\begin{align}
\tilde{\alpha} & = \alpha^2\beta, & \tilde{\beta} & = \frac{1}{\alpha}.
\end{align}
In the $(\tilde{\alpha}, \tilde{\beta})$-coordinates
\ben
&& \mu_2 = \frac{1}{|\tilde{\beta}|^2+1} \sqrt{|\tilde{\alpha}|^2(|\tilde{\beta}|^2+1)^2+c^2}, \\
&& z= \half (c_1+c_2)
- \frac{|\tilde{\beta}|^2-1}{2(|\tilde{\beta}|^2+1)}
\sqrt{|\tilde{\alpha}|^2(|\tilde{\beta}|^2+1)^2+c^2}.
\een
When $\tilde{\alpha} = 0$,
\begin{align*}
\mu_2 & = \frac{c}{|\tilde{\beta}|^2+1}, &
z & = \frac{c_1+c_2}{2} - \frac{(|\tilde{\beta}|^2-1)c}{2(|\tilde{\beta}|^2+1)},
\end{align*}
and therefore $\mu_2+\mu_1 = - c_1$,
and the $\tilde{\beta}$-axis in the $(\tilde{\alpha}, \tilde{|beta})$-plane
is mapped by the moment map to the $L_2$ part of the boundary.
When $\tilde{\beta} =0$,
\begin{align*}
\mu_2 & = \sqrt{|\tilde{\alpha}|^2+c^2}, &
z & = \half (c_1+c_2) + \frac{1}{2} \sqrt{|\tilde{\alpha}|^2+c^2},
\end{align*}
and so $\mu_2 + 2 \mu_1 = - (c_1+c_2)$,
therefore,
the $\tilde{\alpha}$-axis in the $(\tilde{\alpha}, \tilde{\beta})$-plane
is mapped to the $L_3$ part of the boundary.

There is another local coordinate patch,
the one over $\hat{U} = \bR^3 - \{ (0,0,z) \;|\; z \leq c_1\}$
where we have
\ben
&& g = \frac{1}{V}(d\hat{\varphi} + \hat{\alpha})^2 + V \cdot (dx^2+dy^2+dz^2),
\een
where $\hat{\alpha} = \alpha + \theta$,
so we can take
\ben
\hat{\varphi} = \varphi - \arctan\frac{y}{x} = \varphi - \theta_2,
\een
and therefore take the following symplectic coordinates,
\begin{align*}
\hat{\theta}_1 & = \hat{\varphi} = \theta_1-\theta_2,
& \hat{\mu}_1 & = \mu_1 = - z, \\
\hat{\theta}_2 & = \theta_2 =  \arctan \frac{y}{x}, &
\hat{\mu}_2 & = \mu_2 + z =  \half\sum_{j=1}^n (r_j+z-c_j) + z.
\end{align*}
Therefore, we may take
\begin{align}
 \hat{z}_1 & = \frac{z_1}{z_2}, & \hat{z}_2 & = z_2.
\end{align}
By \eqref{eqn:Alpha-Beta},
introduce $(\hat{\alpha}, \hat{\beta})$-coordinates by:
\be \label{eqn:Alpha-Beta-hat}
\hat{z}_1 = \hat{\beta},  \hat{z}_2 = \hat{\alpha} \hat{\beta}.
\ee
Together with \eqref{eqn:Alpha-Beta2},
we derive the following relations:
\begin{align}
\hat{\alpha} & = \alpha^2\beta, & \hat{\beta} & = \frac{1}{\alpha}.
\end{align}
It follows that we have
\begin{align}
\hat{\alpha} & = \tilde{\alpha}, &
\hat{\beta} & = \tilde{\beta}.
\end{align}

\subsection{Hessian local complex coordinates on toric Gibbons-Hawking spaces}

\label{sec:General}

Generalizing the $n=1$ and $n=2$ case,
one sees that for general $n$,
the toric Gibbons-Hawking space is covered by $n$ local coordinate patches,
with local coordinates $(\alpha_j, \beta_j)$, $j=1, \dots, n$.
The coordinates $(\alpha_1, \beta_1)$ are given by:
\begin{align}
\alpha_1 & = \frac{z_2}{z_1}, &
\beta_1 & = z_1,
\end{align}
where $z_1, z_2$ are defined by:
\bea
&& z_1 = \prod_{j=1}^n (r_j-(z-c_j))^{1/2}\cdot e^{\sqrt{-1} \theta_1}, \\
&& z_2 = x+ \sqrt{-1} y.
\eea
And for $i=1, \dots, n-1$,
\begin{align} \label{eqn:Coordinate change}
\alpha_{i+1} & = \alpha_i^2 \beta_i, & \beta_{i+1} & = \alpha_i^{-1},
\end{align}
In other words,
the toric Gibbons-Hawking space can be obtained by gluing $n$ copies of $\bC^2$ as follows.
For $i=1, \dots, n$, denote by $U_i$ the $i$-th copy of $\bC^2$
and let $(\alpha_i, \beta_i)$ be the linear coordinates on it.
Then $U_i$ and $U_{i+1}$ are glued together
by the above formula for change of coordinates.
This space is nothing but the toric crepant resolution $\widehat{\bC^2/\bZ_n}$
of the orbifold $\bC^2/\bZ_n$.
Therefore,
we can summarize our discussions so far as follows: Given $n$ distinct real numbers $c_1 < \cdots < c_n$,
one obtains via the Gibbons-Hawking construction a K\"ahler Ricci-flat metric
on $X_n:=\widehat{\bC^2/\bZ_n}$,
whose K\"ahler potential is given by:
\be
\psi^\vee= -\sum_{j=1}^n c_j \log(r_j-(z-c_j))+C_1\mu_1 + C_2\mu_2
\ee
for some constants $C_1$ and $C_2$.
Here $\mu_1, \mu_2$ are the two components of the moment map for  Hamiltonian $2$-torus
action on $X_n$,
\begin{align}
\mu_1 & = - z, & \mu_2 = \half \sum_{i=1}^n (r_i - (z-c_j)).
\end{align}
By \eqref{eqn:action},
the torus action is given by:
\be
(e^{it_1}, e^{it_2}) \cdot (z_1, z_2) = (e^{it_1} z_1, e^{it_2}z_2).
\ee
in the $(z_1, z_2)$-coordinates,
hence it is given by
\be \label{eqn:Action2}
(e^{it_1}, e^{it_2}) \cdot (\alpha_1, \beta_1) = (e^{i(t_2-t_1)} \alpha_1, e^{it_1}\beta_1).
\ee
in the $(\alpha_1, \beta_1)$-coordinates.
The image of the moment map is a convex body whose boundary consists
of consecutively  a ray $L_1$, $n-1$ intervals $L_2, \dots, L_{n}$, and another
ray $L_{n+1}$.
Furthermore,
the $\alpha_i$-axis in the $(\alpha_i, \beta_i)$-plane is mapped to $L_{i+1}$,
and the $\beta_i$-axis is mapped to $L_i$, for $i=1, \dots, n$.

\section{Phase Changes of Toric Gibbons-Hawking Metrics}

\label{sec:Phase}

In this Section we discuss the phase change \cite{Duan-Zhou1, Duan-Zhou2} of Gibbons-Hawking metrics
as applications of the results in preceding Sections.

\subsection{A phase change of Eguchi-Hanson metrics}

Let us now focus on the case of $n=2$
and discuss the phase change phenomena of the K\"ahler Ricci-flat metric
introduced in \cite{Duan-Zhou1, Duan-Zhou2}.
We have seen that in this case the K\"ahler potential is given by
\be
\psi_c^\vee = c \log \frac{\sqrt{|\beta|^2(1+|\alpha|^2)^2+c^2}-c}{1+|\alpha|^2}
+ \sqrt{|\beta|^2(1+|\alpha|^2)^2+c^2},
\ee
and the K\"ahler form is
\ben
\omega_c
& = & \sqrt{-1} \biggl\{ \frac{1}{(1+|\alpha|^2)^2}
\biggl( \sqrt{|\beta|^2(1+|\alpha|^2)^2+c^2} + \frac{|\beta|^2(1+|\alpha|^2)^2|z|^2}
{\sqrt{|\beta|^2(1+|\alpha|^2)^2+c^2}}
\biggr) d\alpha \wedge d \bar{\alpha} \\
& + &  \frac{(1+|\alpha|^2)}{2\sqrt{|\beta|^2(1+|\alpha|^2)^2 + c^2}} \bar{\alpha} \beta
\cdot d\alpha  \wedge d\bar{\beta}
  +  \frac{(1+|\alpha|^2)}{2\sqrt{|\beta|^2(1+|\alpha|^2)^2 + c^2}} \alpha \bar{\beta}
\cdot d\beta  \wedge d \bar{\alpha} \\
& + &  \frac{(1+|\alpha|^2)^2}{4\sqrt{|\beta|^2(1+|\alpha|^2)^2 + c^2}} \cdot d\beta  \wedge d \bar{\beta}
\biggr\}.
\een
With respect to the action \eqref{eqn:Action2},
we can take
\begin{align}
\mu_1^c & = \frac{1-|\alpha|^2}{2(1+|\alpha|^2)}
\sqrt{|\beta|^2(1+|\alpha|^2)^2+c^2}, \\
\mu_2^c & = \frac{|\alpha|^2}{1+|\alpha|^2} \sqrt{|\beta|^2(1+|\alpha|^2)^2+c^2}.
\end{align}
The image of the moment map is the convex body defined by:
\begin{align}
\mu_2^c & \geq 0, & \mu_2^c+\mu_1^c & \geq \frac{c}{2}, & \mu_2^c + 2\mu_1^c & \geq 0.
\end{align}

Now we change $c$ to $b\sqrt{-1}$ for some $b > 0$,
then the K\"ahler potential becomes:
\be
\psi_{b\sqrt{-1}}^\vee = \sqrt{-1} a \log \frac{\sqrt{|\beta|^2(1+|\alpha|^2)^2-b^2}- b \sqrt{-1}}{1+|\alpha|^2}
+ \sqrt{|\beta|^2(1+|\alpha|^2)^2-b^2}.
\ee
We have two cases to consider.

\subsubsection{Case 1}

When $|\beta|^2(1+|\alpha|^2)^2-b^2 \geq 0$,
one can rewrite the K\"ahler potential as:
\be
\psi_{b\sqrt{-1}}^\vee = \sqrt{|\beta|^2(1+|\alpha|^2)^2-b^2}
- b \arctan \frac{\sqrt{|\beta|^2(1+|\alpha|^2)^2-b^2}}{b} + \frac{\pi}{2}.
\ee
This is still a purely real-valued function.
Then one has
\ben
\frac{\pd \psi_{b\sqrt{-1}}^\vee}{\pd \bar{\alpha}}
& = & \frac{|\beta|^2(1+|\alpha|^2)\alpha}{\sqrt{|\beta|^2(1+|\alpha|^2)^2-b^2}}
- b \frac{\frac{|\beta|^2(1+|\alpha|^2)\alpha}{a\sqrt{|\beta|^2(1+|\alpha|^2)^2-b^2}}}
{1 + \frac{|\beta|^2(1+|\alpha|^2)^2-b^2}{a^2} } \\
& = & \frac{\alpha}{1+|\alpha|^2}\sqrt{|\beta|^2(1+|\alpha|^2)^2-b^2},\\
\frac{\pd \psi_{b\sqrt{-1}}^\vee}{\pd \bar{\beta}}
& = & \frac{(1+|\alpha|^2)^2\beta}{2\sqrt{|\beta|^2(1+|\alpha|^2)^2-b^2}}
- b \frac{\frac{(1+|\alpha|^2)^2\beta}{2a\sqrt{|\beta|^2(1+|\alpha|^2)^2-b^2}}}
{1 + \frac{|\beta|^2(1+|\alpha|^2)^2-b^2}{b^2} } \\
& = & \frac{1}{2\bar{\beta}}\sqrt{|\beta|^2(1+|\alpha|^2)^2-b^2},
\een
and the second derivatives are:
\ben
\frac{\pd^2 \psi_{b\sqrt{-1}}^\vee}{\pd \alpha \pd \bar{\alpha}}
& = & \frac{\sqrt{|\beta|^2(1+|\alpha|^2)^2-b^2}}{(1+|\alpha|^2)^2}
+ \frac{|\alpha|^2|\beta|^2}{\sqrt{|\beta|^2(1+|\alpha|^2)^2-b^2}}, \\
\frac{\pd^2 \psi_{b\sqrt{-1}}^\vee}{\pd \alpha \pd \bar{\beta}}
& = & \frac{(1+|\alpha|^2)\bar{\alpha}\beta}{2\sqrt{|\beta|^2(1+|\alpha|^2)^2-b^2}}, \\
\frac{\pd^2 \psi_{b\sqrt{-1}}^\vee}{\pd \beta \pd \bar{\beta}}
& = & \frac{(1+|\alpha|^2)^2}{4\sqrt{|\beta|^2(1+|\alpha|^2)^2-b^2}}.
\een
So in this case the K\"ahler form becomes:
\ben
\omega_{b\sqrt{-1}}
& = & \sqrt{-1} \biggl\{ \frac{1}{(1+|\alpha|^2)^2}
\biggl( \sqrt{|\beta|^2(1+|\alpha|^2)^2-b^2} + \frac{|\beta|^2(1+|\alpha|^2)^2|z|^2}
{\sqrt{|\beta|^2(1+|\alpha|^2)^2-b^2}}
\biggr) d\alpha \wedge d \bar{\alpha} \\
& + &  \frac{(1+|\alpha|^2)}{2\sqrt{|\beta|^2(1+|\alpha|^2)^2 - b^2}} \bar{\alpha} \beta
\cdot d\alpha  \wedge d\bar{\beta}
  +  \frac{(1+|\alpha|^2)}{2\sqrt{|\beta|^2(1+|\alpha|^2)^2 - b^2}} \alpha \bar{\beta}
\cdot d\beta  \wedge d \bar{\alpha} \\
& + &  \frac{(1+|\alpha|^2)^2}{4\sqrt{|\beta|^2(1+|\alpha|^2)^2 - b^2}} \cdot d\beta  \wedge d \bar{\beta}
\biggr\}.
\een
With respect to the action \eqref{eqn:Action2},
we can take
\begin{align}
\mu_1^{b\sqrt{-1}} & = \frac{1-|\alpha|^2}{2(1+|\alpha|^2)}
\sqrt{|\beta|^2(1+|\alpha|^2)^2-b^2}, \\
\mu_2^{b\sqrt{-1}} & = \frac{|\alpha|^2}{1+|\alpha|^2} \sqrt{|\beta|^2(1+|\alpha|^2)^2-b^2}.
\end{align}
The image of the moment map is the convex cone defined by:
\begin{align}
\mu_2^{b\sqrt{-1}} & \geq 0, & \mu_2^{b\sqrt{-1}}+\mu_1^{b\sqrt{-1}} & \geq 0, &
\mu_2^{b\sqrt{-1}} + 2\mu_1^{b\sqrt{-1}} & \geq 0.
\end{align}
In fact, in this case
the second condition is implied by the first and the third conditions.

\subsubsection{Case 2}
When $|\beta|^2(1+|\alpha|^2)^2-b^2 < 0$,
one can rewrite the K\"ahler potential as:
\be
\begin{split}
\psi_{b\sqrt{-1}}^\vee = \sqrt{-1} \biggl( & b \log \frac{b-\sqrt{b^2-|\beta|^2(1+|\alpha|^2)^2}}{1+|\alpha|^2} \\
& + \sqrt{b^2-|\beta|^2(1+|\alpha|^2)^2} + \frac{3\pi \sqrt{-1}b}{2} \biggr).
\end{split}
\ee
Up to an unimportant constant,
$\psi^\vee$ becomes purely imaginary,
hence it does not define a pseudo-K\"ahler metric,
but instead, a {\em purely imaginary pseudo-K\"ahler metric}.
Its K\"ahler form can be found by taking its derivatives:
\ben
\frac{\pd\psi^\vee_{b\sqrt{-1}}}{\pd \bar{\alpha}}
& = & \sqrt{-1} \biggl( \frac{-|\beta|^2(1+|\alpha|^2)\alpha}{\sqrt{b^2-|\beta|^2(1+|\alpha|^2)^2}}
- \frac{b\frac{-|\beta|^2(1+|\alpha|^2)\alpha}{\sqrt{b^2-|\beta|^2(1+|\alpha|^2)^2}}}{b+\sqrt{a^2-|\beta|^2(1+|\alpha|^2)^2}}
 + \frac{b\alpha}{1+ |\alpha|^2} \biggr) \\
& = & \sqrt{-1} \biggl( \frac{-|\beta|^2(1+|\alpha|^2)\alpha}{b+ \sqrt{b^2-|\beta|^2(1+|\alpha|^2)^2}}
 + \frac{b\alpha}{1+ |\alpha|^2} \biggr) \\
& = &  \sqrt{-1} \biggl( \frac{\alpha}{1+|\alpha|^2}\sqrt{b^2-|\beta|^2(1+|\alpha|^2)^2} \biggr).
\een

\ben
\frac{\pd\psi^\vee_{b\sqrt{-1}}}{\pd \bar{\beta}}
& = & \sqrt{-1} \biggl( \frac{-\beta(1+|\alpha|^2)^2}{2\sqrt{b^2-|\beta|^2(1+|\alpha|^2)^2}}
- \frac{b\frac{-\beta(1+|\alpha|^2)^2}{2\sqrt{b^2-|\beta|^2(1+|\alpha|^2)^2}}}{b+\sqrt{b^2-|\beta|^2(1+|\alpha|^2)^2}}  \biggr) \\
& = & \sqrt{-1} \biggl(\frac{-\beta(1+|\alpha|^2)^2}{2(b+\sqrt{b^2-|\beta|^2(1+|\alpha|^2)^2})}
= -\frac{b}{2\bar{\beta}} + \frac{\sqrt{b^2-|\beta|^2(1+|\alpha|^2)^2}}{2\bar{\beta}}  \biggr).
\een

\ben
\frac{\pd^2\psi^\vee_{b\sqrt{-1}}}{\pd \alpha\pd \bar{\alpha}}
& = & \sqrt{-1} \biggl(\frac{\sqrt{b^2-|\beta|^2(1+|\alpha|^2)^2}}{(1+|\alpha|^2)^2}
- \frac{|\alpha|^2|\beta|^2}{\sqrt{b^2-|\beta|^2(1+|\alpha|^2)^2}}  \biggr).
\een

\ben
\frac{\pd^2\psi^\vee_{b\sqrt{-1}}}{\pd \alpha\pd \bar{\beta}}
& = & \sqrt{-1} \biggl( -
\frac{(1+|\alpha|^2)\bar{\alpha}\beta}{2\sqrt{b^2-|\beta|^2(1+|\alpha|^2)^2}}  \biggr).
\een

\ben
\frac{\pd\psi^\vee_{b\sqrt{-1}}}{\pd \beta\pd \bar{\beta}}
& = & \sqrt{-1} \biggl(-\frac{(1+|\alpha|^2)^2}{4\sqrt{b^2-|\beta|^2(1+|\alpha|^2)^2}}  \biggr) .
\een
And so the K\"ahler form is
\ben
\omega_{b\sqrt{-1}}
& = & \biggl(- \frac{\sqrt{b^2-|\beta|^2(1+|\alpha|^2)^2}}{(1+|\alpha|^2)^2}
+ \frac{|\alpha|^2|\beta|^2}{\sqrt{b^2-|\beta|^2(1+|\alpha|^2)^2}}  \biggr) d \alpha \wedge d\bar{\alpha} \\
& + & \frac{(1+|\alpha|^2)\bar{\alpha}\beta}{2\sqrt{b^2-|\beta|^2(1+|\alpha|^2)^2}}
d \alpha \wedge d\bar{\beta}
+ \frac{(1+|\alpha|^2)\alpha\bar{\beta}}{2\sqrt{b^2-|\beta|^2(1+|\alpha|^2)^2}}
d \beta \wedge d\bar{\alpha} \\
& + & \frac{(1+|\alpha|^2)^2}{4\sqrt{b^2-|\beta|^2(1+|\alpha|^2)^2}}
d \beta \wedge d\bar{\beta}.
\een
Note this is a {\em purely imaginary form}.
With respect to the action \eqref{eqn:Action2},
we can take the moment map to be given by the purely imaginary-valued functions:
\begin{align}
\mu_1^{b\sqrt{-1}} & = \sqrt{-1} \frac{1-|\alpha|^2}{2(1+|\alpha|^2)}
\sqrt{b^2-|\beta|^2(1+|\alpha|^2)^2}, \\
\mu_2^{b\sqrt{-1}} & = \sqrt{-1}\frac{|\alpha|^2}{1+|\alpha|^2} \sqrt{b^2-|\beta|^2(1+|\alpha|^2)^2}.
\end{align}
The image of the moment map is the convex body defined by:
\begin{align}
\frac{\mu_2^{b\sqrt{-1}}}{\sqrt{-1}} & \leq 0, &
\frac{\mu_2^{b\sqrt{-1}}}{\sqrt{-1}} + \frac{\mu_1^{b\sqrt{-1}}}{\sqrt{-1}} & \geq - \frac{b}{2}, &
\frac{\mu_2^{b\sqrt{-1}}}{\sqrt{-1}} + 2\frac{\mu_1^{b\sqrt{-1}}}{\sqrt{-1}} & \geq 0.
\end{align}

We can now make a comparison of all the cases in this subsection and make a summary.
One can introduce a parameter $T = c^2$ which plays the role of the temperature,
and consider the family $\psi^\vee_{\sqrt{T}}$, $\omega_{\sqrt{T}}$
and $(\mu_1^{\sqrt{T}}, \mu_2^{\sqrt{T}})$ defined on
$X_2=\cO_{\bP^1}(-2)$.
A close examination of the computations in this subsection shows that
one can use the same formulas for both the $T\geq 0$ and the $T< 0$ cases.
When $T$ changes from a positive number to a negative number,
a phase transition happens at $T = 0$.
For $T > 0$, $\omega_{\sqrt{T}}$ is defined on the whole space of the line bundle
$\cO_{\bP^1}(-2)$;
when $T=0$,
the metric blows up along the zero section of the line bundle;
when $T< 0$,
$\omega_{\sqrt{T}}$ defines a K\"ahler metric
outside a circle bundle of $\cO_{\bP^1}(-2)$,
and it defines an imaginary pseudo-K\"ahler metric inside it.

The appearance of imaginary pseudo-K\"ahler metrics suggests that one can consider
$\psi^\vee_c$, $\omega_c$ and $(\mu_1^c, \mu_2^c)$ defined for all $c = a + bi \in \bC$
using the same formulas in the beginning of this subsection.
This means in general we consider complex-valued K\"ahler form
with nonvanishing real and imaginary parts.

\subsection{Phase changes of toric Gibbons-Hawking metrics}

By the discussions in \S \ref{sec:General}
we know that for $\vec{c} = (c_1, \dots, c_n)$ with pairwise distinct components,
one can define
on $X_n = \widehat{\bC^2/\bZ_n}$ a K\"ahler metric $\omega_{\vec{c}}$
with a K\"ahler potential $\psi^\vee_{\vec{c}}$,
and a Hamiltonian $T^2$-action with moment map $\mu^{\vec{c}}=(\mu^{\vec{c}}_1, \mu_2^{\vec{c}})$.
By allowing $\vec{c}$ to run in $\vec{\bC}^n$,
one can then obtain many possibilities of phase changes
of the toric Gibbons-Hawking metrics.
In extending the formulas for $\vec{c} \in \bR^n$ to $\vec{c} \in \bC^n$
one may encounter the problem of taking different branches
hence may have multi-valued solutions and may have to deal with
interesting monodromy problems.
We will leave these issues to future investigations.

\section{Concluding Remarks}

\label{sec:Conclusions}

The method of moment maps and Hessian geometry have been developed
in the mathematical literature in the context of toric canonical K\"ahler  metrics
on compact toric manifolds \cite{Gui, Abr, Don},
and they have been developed in the physics literature
in the context of AdS/CFT correspondence involving
toric Sasaki-Einstein metrics and noncompact Calabi-Yau metric cones \cite{Mar-Spa-Yau}.
More recent developments have generalized to K\"ahler metrics on
noncompact toric spaces obtained by explicit constructions,
and have been applied to the Kepler problem \cite{Zhou-Kepler}.
Earlier results on convexity of moment maps and Hessian geometry
in the noncompact case have focused on the case of convex cones.
In these new developments many examples having
convex bodies as moment images have been found.
Furthermore,
new approach to the phase phenomena introduced in earlier work \cite{Duan-Zhou1, Duan-Zhou2}
based on such convexity  has been developed \cite{Wang-Zhou}.

This work is the result of a natural continuation of the ideas in \cite{Zhou-Kepler}.
The theme is still to establish a link
between gravity and  string theory,
guided by the basic principle of applying the intrinsic symmetry of the geometry.
Therefore,
we have focused on the toric Gibbons-Hawking metrics
to have larger symmetry groups.

The phase change we study in this paper leads us to imaginary K\"ahler metrics.
This is actually not so surprising from the point of view of either general relativity or
string theory.
In general relativity,
it is a common practice to change from a Lorentzian space-time to a Euclidean space-time
by Wick rotation,
i.e.,
changing to purely imaginary time.
This complexification of the time variable led Penrose
to consider the complexification of the whole space-time and metric.
The physical spact-time is then often a real slice of this complexified space-time.
Our work suggests the possibility of the following scenario when one takes
a slice of the complexified space-time:
At certain part of the space-time the metric is real-valued,
but at certain part of the space-time the metric is purely imaginary
and such a part is regarded as a blackhole.
Of course,
one can also encounter a slice along which the metric becomes complex-valued,
i.e., the real part and the imaginary part of it can be both nonvanishing.
But this is very natural from the point of view nonlinear sigma models 
or gauged linear sigma models in string theory \cite{Hori}.
When the real part  is K\"ahler,
one gets a complexified K\"ahler form $\omega - i B$ in the string theory literature.

In the example of Eguchi-Hanson space,
we introduce a parameter $c=a+b\sqrt{-1}$ to induce the phase change.
We regard this parameter as a ``complexified temperature".
Another motivation behind the work \cite{Zhou-Kepler} and this paper
is to search for a statistical  reformulation of gravity
by maximum entropy principle,
partly supported by the appearance of expressions like $p\log p$ in many places
in these two papers.
The discussion of phase change in this paper seems to suggest that
suitable ``complexification of entropy", and more generally,
``complexification of statistical mechanics", should be useful for such purpose.

\vspace{.2in}
{\em Acknowledgements}.
The research in this work is partially supported by NSFC grant 11661131005.

\end{document}